\definecolor{red}{RGB}{255,25,25}
\definecolor{blue}{RGB}{25,50,200}
\newtheorem{theorem}{Theorem}[section]
\crefname{theorem}{Theorem}{Theorems}
\newtheorem{lemma}[theorem]{Lemma}
\crefname{lemma}{Lemma}{Lemmas}
\crefname{proposition}{Proposition}{Propositions}
\crefname{prop}{Proposition}{Propositions}
\crefname{corollary}{Corollary}{Corollaries}
\crefname{cor}{Corollary}{Corollaries}
\crefname{conjecture}{Conjecture}{Conjectures}
\crefname{conj}{Conjecture}{Conjectures}
\newtheorem*{conj*}{Conjecture}
\crefname{conj}{Conjecture}{Conjectures}
\theoremstyle{definition}
\newtheorem{definition}[theorem]{Definition}
\crefname{definition}{Definition}{Definitions}
\crefname{defn}{Definition}{Definitions}
\crefname{example}{Example}{Examples}
\crefname{notation}{Notation}{Notation}
\newtheorem*{notation*}{Notation}
\crefname{notation}{Notation}{Notation}
\crefname{problem}{Problem}{Problems}
\crefname{question}{Question}{Questions}
\crefname{condition}{Condition}{Conditions}
\crefname{assumption}{Assumption}{Assumptions}
\newtheorem{hyp}{Hyp}
\crefname{hyp}{Hyp}{Hyp}
\theoremstyle{remark}
\newtheorem{rmk}[theorem]{Remark}
\crefname{rmk}{Remark}{Remarks}
\newtheorem*{rmk*}{Remark}
\crefname{rmk}{Remark}{Remarks}
\newtheorem{remark}[theorem]{Remark}
\crefname{remark}{Remark}{Remarks}
\crefname{fact}{Fact}{Facts}
\crefname{claim}{Claim}{Claims}
\newtheorem*{claim*}{Claim}
\crefname{claim}{Claim}{Claims}
\crefname{step}{Step}{Steps}
\crefname{case}{Case}{Cases}
\numberwithin{equation}{section}
\newcommand{\ratmap}{\dashrightarrow}
\newcommand{\lra}{\longrightarrow}
\newcommand{\arxiv}[1]{\href{https://arxiv.org/abs/#1}{{\tt arXiv:#1}}}
\def\MR#1{\href{https://www.ams.org/mathscinet-getitem?mr=#1}{MR~#1~}}
\newcommand{\bbP}{\mathbb{P}}
\newcommand{\bC}{\mathbf{C}}
\newcommand{\bF}{\mathbf{F}}
\newcommand{\bQ}{\mathbf{Q}}
\newcommand{\bR}{\mathbf{R}}
\newcommand{\bZ}{\mathbf{Z}}
\newcommand{\sO}{\mathscr{O}}
\newcommand{\reg}{{\rm reg}}
\newcommand{\GL}{\mathrm{GL}}
\newcommand{\alb}{\operatorname{alb}}
\newcommand{\Alb}{\operatorname{Alb}}
\newcommand{\Aut}{\operatorname{Aut}}
\newcommand{\dr}{\operatorname{dr}}
\newcommand{\id}{\operatorname{id}}
\newcommand{\isom}{\simeq}
\newcommand{\NS}{\operatorname{NS}}
\newcommand{\num}{\equiv}
\newcommand{\Pic}{\operatorname{Pic}}
\newcommand{\PsAut}{\operatorname{PsAut}}
\newcommand{\rank}{\operatorname{rank}}
\begin{document}

\title[Free abelian groups of sub-maximal dynamical rank]{Free abelian group actions on normal projective varieties: sub-maximal dynamical rank case}

\author{Fei Hu}
\address{University of British Columbia, Vancouver, BC V6T 1Z2, Canada \endgraf
Pacific Institute for the Mathematical Sciences, Vancouver, BC V6T 1Z4, Canada}
\curraddr{University of Waterloo, Waterloo, ON N2L 3G1, Canada}
\email{\href{mailto:hf@u.nus.edu}{hf@u.nus.edu}}
\urladdr{\url{https://sites.google.com/view/feihu90s/}}

\author{Sichen Li}
\address{East China Normal University, 500 Dongchuan Road, Shanghai 200241, China \endgraf
National University of Singapore, 10 Lower Kent Ridge Road, Singapore 119076}
\email{\href{mailto:lisichen123@foxmail.com}{lisichen123@foxmail.com}}

\begin{abstract}
Let $X$ be a normal projective variety of dimension $n$ and $G$ an abelian group of automorphisms such that all elements of $G\setminus \{\id\}$ are of positive entropy.
Dinh and Sibony showed that $G$ is actually free abelian of rank $\le n - 1$.
The maximal rank case has been well understood by De-Qi Zhang.
We aim to characterize the pair $(X, G)$ such that $\rank G = n - 2$.
\end{abstract}

\subjclass[2010]{
14J50, %Automorphisms of surfaces and higher-dimensional varieties
32M05, %Complex Lie groups, automorphism groups acting on complex spaces
32H50, %Iteration problems,
37B40. %Topological entropy
}

%\date{\today}

\keywords{automorphism, dynamical degree, dynamical rank, topological entropy, Kodaira dimension, weak Calabi--Yau variety, special MRC fibration}

\thanks{The first author was partially supported by postdoctoral fellowships of the University of British Columbia, the Pacific Institute for the Mathematical Sciences, and the University of Waterloo.
The second author was supported by a scholarship of China Scholarship Council.}

\maketitle

%%%%%%%%%%%%%%%%%%%%%%%%%%%%%%%%%%%%%%%%%%%%%%%%%%%%%%%%%%

\section{Introduction}
\label{sec:intro}

%%%%%%%%%%%%%%%%%%%%%%%%%%%%%%%%%%%%%%%%%%%%%%%%%%%%%%%%%%

\noindent
We work over the field $\bC$ of complex numbers.
Let $X$ be a normal projective variety.
Denote by $\NS(X) \coloneqq \Pic(X)/\Pic^{\circ}(X)$ the {\it N\'eron--Severi group} of $X$, i.e., the finitely generated abelian group of Cartier divisors on $X$ modulo algebraic equivalence.
For a field $\bF = \bQ, \bR$ or $\bC$, we denote by $\NS(X)_\bF$ the finite-dimensional $\bF$-vector space
$\NS(X) \otimes_{\bZ} \bF$.
The {\it first dynamical degree $d_1(g)$} of an automorphism $g \in \Aut(X)$ is defined as the spectral radius of its natural pullback $g^*$ on $\NS(X)_\bR$, i.e.,
\[
d_1(g) \coloneqq \rho\Big(g^*|_{\NS(X)_\bR} \Big) = \max\Big\{|\lambda| : \lambda \text{ is an eigenvalue of } g^*|_{\NS(X)_\bR} \Big\}.
\]

We say that $g$ is of {\it positive entropy} if $d_1(g)>1$, otherwise it is of {\it null entropy}.
For a subgroup $G$ of the automorphism group $\Aut(X)$, we define the {\it null-entropy subset of $G$} as
\[
N(G) \coloneqq \left\{ g \in G : g \text{ is of null entropy, i.e., } d_1(g) = 1 \right\}.
\]
We call $G$ of {\it positive entropy} (resp. of {\it null entropy}), if $N(G) = \{\id\} \subsetneq G$ (resp. $N(G) = G$).
Indeed, when $X$ is smooth and hence $X(\bC)$ is a compact K\"ahler manifold, our positivity notion of entropy is equivalent to the positivity of topological entropy in complex dynamics by the log-concavity of dynamical degrees and the fundamental work of Gromov \cite{Gromov03} and Yomdin \cite{Yomdin87}.
We refer to \cite[\S4]{DS17} and references therein for a comprehensive exposition on dynamical degrees, topological and algebraic entropies.

In \cite{DS04}, Dinh and Sibony proved that for any abelian subgroup $G$ of $\Aut(X)$, if $G$ is of positive entropy, then $G$ is free abelian of rank $\le \dim X-1$.
This was subsequently extended by De-Qi Zhang \cite{Zhang09} to the solvable group case.
We are thus interested in algebraic varieties admitting the action of free abelian groups of positive entropy.
Therefore, it is meaningful for us to consider the following hypothesis.

\begin{hyp}
\label{hyp}
$X$ is a normal projective variety of dimension $n$ and $G \isom \bZ^{r}$ is a subgroup of $\Aut(X)$ with $1 \le r \le n-1$, such that $G$ is of positive entropy, i.e., all elements of $G \setminus \{\id\}$ are of positive entropy.
\end{hyp}

Often, we shall call the above positive integer $r$ the {\it dynamical rank} of $G$ to emphasize that $G$ is of positive entropy in the context of dynamics, not just being a free abelian group.
See \cref{subsec:dyn-rank} for a more general consideration on dynamical ranks.

In the last years, the maximal dynamical rank case $r = n-1$ has been intensively studied by De-Qi Zhang in his series papers (see e.g., \cite{Zhang09,Zhang13,Zhang16}), which extend the known surface case \cite{Cantat99} to higher dimensions.
See also \cite{DS04,Oguiso07,CZ12,CWZ14,DHZ15,OT15,Lesieutre18} for relevant work.
We rephrase one of Zhang's main results as follows.

\begin{theorem}[{cf.~\cite[Theorems 1.1 and 2.4]{Zhang16}}]
\label{thm:Zhang}
Let $(X,G)$ satisfy \hyperref[hyp]{\rm Hyp$(n,n-1)$} with $n \ge 3$.
Suppose that $X$ is not rationally connected, or $X$ has only $\bQ$-factorial Kawamata log terminal (klt) singularities and the canonical divisor $K_X$ is pseudo-effective.
Then after replacing $G$ by a finite-index subgroup, the following assertions hold.
\begin{itemize}
\item[(1)] There is a birational map $X \ratmap Y = A/F$ such that the induced action of $G$ on $Y$ is biregular, where $A$ is an abelian variety and $F$ is a finite group whose action on $A$ is free outside a finite subset of $A$.
\item[(2)] The canonical divisor of $Y$ is $\bQ$-linearly equivalent to zero, i.e., $K_Y \sim_\bQ 0$.
\item[(3)] There is a faithful action of $G$ on $A$ such that $A \lra A/F=Y$ is $G$-equivariant. Every $G$-periodic proper subvariety of $Y$ or $A$ is a point.
\end{itemize}
\end{theorem}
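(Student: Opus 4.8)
The plan is to run a $G$-equivariant minimal model program, to recognize the resulting minimal model through the Beauville--Bogomolov--Kawamata decomposition, and to exploit the positive-entropy hypothesis at every turn in order to eliminate numerically invariant classes and the structural factors that cannot carry an action of maximal dynamical rank $n-1$. The only role of the dichotomy in the hypotheses is to put us in the regime where $K_X$ is pseudo-effective on a mildly singular model; from there on the argument is uniform.

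\emph{Reduction to the pseudo-effective case.} If $X$ is $\bQ$-factorial klt with $K_X$ pseudo-effective there is nothing to do. If instead $X$ is not rationally connected, I would replace $X$ by a $G$-equivariant resolution and turn the maximal rationally connected fibration into a morphism $\pi\colon X\to Z$, which is $G$-equivariant since the MRC fibration is birationally canonical; here $Z$ is non-uniruled of dimension $\ge 1$ and the general fibre $F$ is rationally connected. The key input is the behaviour of dynamical rank under an equivariant fibration: $n-1=\rank G\le \rank(G|_Z)+\dim F$, while $\rank(G|_Z)\le\dim Z-1$ whenever $G$ has infinite image on $Z$ and $\rank G\le\dim F-1$ when that image is finite (a finite-index subgroup then acting fibrewise on $F$). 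Since $\dim Z+\dim F=n$, both alternatives force $\dim F=0$; hence $X$ is not uniruled, and by the pseudo-effectivity criterion of Boucksom--Demailly--P\u{a}un--Peternell, $K_X$ is pseudo-effective. Either way we may now assume $X$ is $\bQ$-factorial klt with $K_X$ pseudo-effective; the subtle point is that passing to $Z$ may introduce null-entropy elements into the acting group, so this reduction must be carried out together with a careful bookkeeping of a finite-index subgroup and of the null-entropy subset on each intermediate base.

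\emph{Equivariant MMP and $K_Y\sim_\bQ 0$.} With $X$ now $\bQ$-factorial klt and $K_X$ pseudo-effective, I would run a $G$-equivariant MMP, contracting $G$-invariant extremal faces (which is legitimate because $g^*K_X\equiv K_X$ for all $g\in G$, the canonical class being preserved by every automorphism); after replacing $G$ by a finite-index subgroup this produces a $G$-equivariant birational contraction $X\ratmap Y$ with $K_Y$ nef and the induced $G$-action on $Y$ biregular, which is the first half of assertion (1). Now positive entropy enters decisively: $K_Y$ is a $G$-invariant class in $\NS(Y)_\bR$, and the structure of a maximal-rank action on the N\'eron--Severi group (as in the work of Dinh--Sibony and De-Qi Zhang --- using common nef eigenvectors $\eta_1,\dots,\eta_{n-1}$ with $\eta_1\cdots\eta_{n-1}\neq 0$ and the relation $D\cdot\eta_1\cdots\eta_{n-1}=0$ forced on every invariant class $D$ by applying a suitable $g^*$) shows that every $G$-invariant nef class is numerically trivial. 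Hence $K_Y\equiv 0$, and since $Y$ is klt, $K_Y\sim_\bQ 0$, which is assertion (2).

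\emph{Identification of $Y$, the remaining assertions, and the main obstacle.} Since $Y$ is klt with $K_Y\sim_\bQ 0$, a quasi-\'etale cover $\wtilde{Y}\to Y$ splits as $T\times\prod_j V_j\times\prod_k W_k$, with $T$ an abelian variety, the $V_j$ strict Calabi--Yau and the $W_k$ hyperk\"ahler (Beauville--Bogomolov--Kawamata). A finite-index subgroup of $G$ lifts to $\wtilde{Y}$ and, after a further finite-index reduction, preserves every factor; since dynamical rank is additive along such a product, each factor of dimension $d$ contributes at most $d-1$, and the dimensions sum to $n$ while the ranks sum to $n-1$, so there is exactly one factor. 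As $n\ge 3$, the sharper dynamical-rank bounds known for strict Calabi--Yau and hyperk\"ahler varieties (which fall short of $n-1$ in dimension $\ge 3$) exclude those, so $\wtilde{Y}=:A$ is an abelian variety and $Y=A/F$ with $F$ finite; combined with the no-periodic-subvariety statement below, klt-ness then forces $F$ to act freely outside a finite set. For (3), the $G$-action on $Y$ lifts, along the quasi-\'etale cover and after enlarging $A$ by the corresponding isogeny, to a faithful $G$-action making $A\to A/F=Y$ equivariant. Finally, if $V\subsetneq A$ were $G$-periodic, then after passing to a finite-index subgroup $V$ is $G$-invariant; by Ueno's theorem its stabilizer $A_0$ is an abelian subvariety, the abelian subvariety $B$ generated by $V$ carries $V/A_0$ as a general-type subvariety of $B/A_0$, and $G$ preserves this picture, so, general-type varieties having finite automorphism group, a finite-index subgroup of $G$ fixes $V/A_0$ pointwise, hence acts trivially on $B/A_0$ and fibrewise for $A\to A/A_0$, forcing $\rank G\le\dim A_0-1\le n-2$ unless $V$ is a point --- which also disposes of $G$-periodic subvarieties of $Y$. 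I expect the main obstacle to lie in the first reduction and its interaction with the MMP: one must guarantee that no dynamical rank is lost when passing to the MRC base, to a smooth model, or along each MMP step, and this means controlling a finite-index subgroup and the null-entropy locus through fibrations whose base action need not be of positive entropy --- exactly the phenomenon that reappears, in a more serious form, in the sub-maximal rank analysis carried out in the rest of the paper.
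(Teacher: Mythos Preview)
The paper does not give its own proof of this statement: \cref{thm:Zhang} is quoted from \cite[Theorems~1.1 and 2.4]{Zhang16} and used as a black box in the proofs of \cref{lemma:pos-kappa,lemma:kappa=0,lemma:neg-kappa}. Your outline is broadly in the spirit of Zhang's argument (reduce to $K_X$ pseudo-effective via the MRC fibration, run an equivariant MMP, kill $K_Y$ using the invariant-class argument, then decompose), but the step where you identify the minimal model has a real gap.

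You invoke the full singular Bogomolov--Beauville decomposition and assert that a finite-index subgroup of $G$ lifts to the splitting cover $\wtilde{Y}$. As the paper itself warns in \cref{rmk:singular-BB}, this lifting is not known in the singular setting, precisely because one lacks a \emph{minimal} splitting cover with a universal property (the obstruction is the unknown finiteness of fundamental groups of singular Calabi--Yau varieties). Zhang works instead with the weak decomposition $\wtilde{Y}\isom Z\times A$ of \cref{lemma:alb-closure,lemma:lifting}, where the lifting is guaranteed by the universal property of the Albanese closure.

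More seriously, you eliminate the Calabi--Yau factor by citing ``sharper dynamical-rank bounds known for strict Calabi--Yau \ldots\ varieties (which fall short of $n-1$ in dimension $\ge 3$)''. No such bound exists: Dinh--Sibony gives only $\dr\le n-1$, and unlike the hyperk\"ahler case there is no quadratic form on $\NS$ to force $\dr\le 1$. Zhang's route is the reverse of yours. He \emph{first} proves that $Y$ carries no $G$-periodic proper subvariety of positive dimension---this is exactly where the nef and big class built from the $n$ common nef eigenvectors (available only in the maximal-rank case; cf.\ \cref{rmkA}(1)) is decisive---and \emph{then} notes that a positive-dimensional weak Calabi--Yau factor $Z$ would force $G$-periodic fibres $Z\times\{a\}$ over torsion points $a\in A$, hence $\dim Z=0$ and $\wtilde{Y}=A$. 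In other words, the ``no periodic subvarieties'' assertion you place last is in fact the engine that drives the identification $Y=A/F$, not a corollary of it. (A minor side remark: in your reduction step the fibration inequality should read $\dr(G)\le\dr(G|_Z)+\dim F-1$, as in \cref{Zha-Hu1}; with the ``$+\dim F$'' you wrote, the infinite-image case does not yield a contradiction.)
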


Our hypothesis \hyperref[hyp]{\rm Hyp$(n,n-1)$} is nothing but Zhang's Hyp(sA), which is stronger than his Hyp(A). But the latter is a property preserved by generically finite maps.
Note that one of the key ingredients of Zhang's proof is the existence of certain $G$-equivariant log minimal model program (or rather, LMMP with scaling), where the klt singularity assumption has its significance (see \cite[Lemma~3.13]{Zhang16}).
On the other hand, for ease of exposition, we are blindly using klt singularity other than log terminal singularity, though there is no actual pair but just $X$ so that they are actually the same.

The aim of this article is to investigate the sub-maximal dynamical rank case $r = n-2$ following Nakayama and Zhang's ideas in \cite{NZ10,Nakayama10}.
Although they only dealt with polarized endomorphisms of normal projective varieties, the machinery developed there is robust so that it could also be adopted in the study of automorphisms.
We refer to \cref{subsec:weak-decomp,subsec:SMRC} for their counterparts.
Other ingredients include the product formula of dynamical degrees due to Dinh and Nguy\^en \cite{DN11} and an inequality about dynamical ranks given by the first author in \cite{Hu19} (see \cref{Zha-Hu1}).
Below is our main result.

\begin{theorem}
\label{thmA}
Let $(X,G)$ satisfy \hyperref[hyp]{\rm Hyp$(n,n-2)$} with $n \ge 3$.
Then the Kodaira dimension $\kappa(X)$ of $X$ is at most one.
Moreover, after replacing $G$ by a finite-index subgroup, we obtain the following partial classification.
\begin{itemize}
\item[(1)] When $\kappa(X) = 1$, let $F$ be a very general fiber of the Iitaka fibration $X \ratmap B$ of $X$, where $\dim B = 1$. Then $G$ descends to a trivial action on the base curve $B$ and acts faithfully on $F$ such that $F$ is $G$-equivariantly birational to a K3 surface, an Enriques surface, or a Q-abelian variety (see \cref{def:Q-abel}).

\item[(2)] When $\kappa(X) = 0$, suppose further that $X$ has only klt singularities and $K_X \num 0$.
Then there exists a finite cover $Y \lra X$, \'etale in codimension one, such that $Y$ is $G$-equivariantly birational to a weak Calabi--Yau variety (see \cref{def:wCY}), an abelian variety, or a product of a weak Calabi--Yau surface and an abelian variety.

\item[(3)] When $\kappa(X) = -\infty$, suppose further that $X$ is uniruled.
Let $\pi \colon X \ratmap Z$ be the special MRC fibration of $X$ (in the sense of Nakayama; see e.g., \cref{def:sMRC}).
Then either $X$ is rationally connected, or $Z$ is birational to a curve of genus $\ge 1$, a K3 surface, an Enriques surface, or a Q-abelian variety $A/F$, where $A$ is an abelian variety and $F$ is a finite group whose action on $A$ is free outside a finite subset of $A$. In particular, if $\dim Z\ge 3$, then there exists a finite cover $X' \lra X$, \'etale in codimension one, such that the induced rational map $\pi' \colon X' \ratmap A$ is $G$-equivariantly birational to the MRC fibration of $X'$.
\end{itemize}
\end{theorem}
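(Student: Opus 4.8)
The plan is to follow the Nakayama--Zhang strategy: analyze $G$ along the Iitaka fibration of $X$ when $\kappa(X)\ge 0$, and along the special MRC fibration (\cref{def:sMRC}) when $\kappa(X)=-\infty$. Both fibrations are canonically attached to $X$, so after replacing $G$ by a finite-index subgroup they become $G$-equivariant, and one may pass to a further finite-index subgroup so that the induced biregular $G$-action persists on all the birational models, quasi-\'etale covers, and product decompositions that appear (with $G$ not permuting any factors). The two bookkeeping devices are the product formula for dynamical degrees of Dinh--Nguy\^en \cite{DN11}, which for a $G$-equivariant fibration $X\ratmap Y$ writes $d_1(g)$ as the larger of a base dynamical degree and a relative one, and the rank inequality \cref{Zha-Hu1}, which bounds $\rank G$ by $\rank(G|_Y)$ plus $\dim F-1$ for a general fiber $F$; combined with the Dinh--Sibony bound $\rank\le\dim-1$ for positive-entropy free abelian groups applied on the fibers, these pin down the numerics.

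First I would bound $\kappa(X)$. Suppose $\kappa(X)=k\ge1$ and let $X\ratmap B$ be the Iitaka fibration, $\dim B=k$, with general fiber $F$ of Kodaira dimension $0$. Since $B$ is birational to the image of a pluricanonical map and, by the canonical bundle formula, carries a natural boundary making it of log general type, $\Aut(X)$ acts on $B$ through a finite group; hence after a further finite-index reduction $G$ acts trivially on $B$ and faithfully on the now $G$-invariant general fiber $F$. As $\kappa(F)=0$ forces $F$ to be non-uniruled, $G|_F$ is a positive-entropy free abelian group on the $(n-k)$-fold $F$, so $n-2=\rank G=\rank(G|_F)\le n-k-1$ by \cite{DS04}, giving $k\le1$. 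When $k=1$ the chain is an equality, so $n=3$ and $G|_F\isom\bZ$ acts faithfully on the $\kappa=0$ surface $F$ with positive entropy, and by the Enriques--Kodaira classification together with Cantat \cite{Cantat99}, $F$ is $G$-equivariantly birational to a K3 surface, an Enriques surface, or a Q-abelian surface, while $G$ acts trivially on the base curve $B$.

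For $\kappa(X)=0$ with $X$ klt and $K_X\num0$, I would pass to the $G$-equivariant global index-one cover and invoke the singular Beauville--Bogomolov decomposition to obtain a cover $Y\to X$, \'etale in codimension one, splitting as a product of an abelian variety with weak Calabi--Yau factors and irreducible holomorphic-symplectic factors, a finite-index subgroup of $G$ preserving the splitting. The dynamical rank is then controlled factor by factor: it is $\dim-1$ for an abelian variety; it is $1$ for a K3 or Enriques surface and for an irreducible holomorphic-symplectic manifold, because commuting positive-entropy isometries of its Lorentzian $(1,1)$-lattice share a common axis and so $G$ injects into $\bbR$; and it is at most $\dim-2$ for a weak Calabi--Yau variety of dimension $\ge3$, since if it equalled $\dim-1$ then \cref{thm:Zhang} would make that factor Q-abelian, contradicting the vanishing of its augmented irregularity. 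The budget $\rank G=n-2$ then leaves exactly the three listed shapes: $Y$ an abelian variety (after a further cover of a Q-abelian one); $Y$ a weak Calabi--Yau $n$-fold; or $Y$ the product of a weak Calabi--Yau surface with an abelian variety of dimension $n-2$.

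For $\kappa(X)=-\infty$ with $X$ uniruled, let $\pi\colon X\ratmap Z$ be the special MRC fibration, with rationally connected general fiber $F$ and non-uniruled base $Z$, $G$-equivariant after finite index. If $Z$ is a point then $X$ is rationally connected; otherwise the exact sequence $1\to\ker(G\to\Aut Z)\to G\to G|_Z\to1$ together with \cite{DS04} on $F$ (on which the kernel acts faithfully and with positive entropy) gives $\rank(G|_Z)\ge\dim Z-1$, and \cref{Zha-Hu1} forces it to be exactly $\dim Z-1$, so the action on $Z$ has maximal dynamical rank. As $Z$ is not rationally connected, for $\dim Z\ge3$ \cref{thm:Zhang} applies and yields $Z\ratmap A/F'$ with $K_{A/F'}\sim_\bQ 0$; base-changing $A\to A/F'$ along $\pi$ and normalizing produces the cover $X'\to X$, \'etale in codimension one, with $\pi'\colon X'\ratmap A$, which is the MRC fibration of $X'$ since its general fiber is rationally connected while $A$ contains no rational curves. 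For $\dim Z=1$ or $2$ the classification of curves and of non-uniruled surfaces carrying such an action gives the list: a curve of genus $\ge1$, or a K3, Enriques, or Q-abelian surface. The main obstacle, I expect, lies precisely in this last case and in the $\kappa(X)=0$ case: making the singular Beauville--Bogomolov splitting and the various covers (only \'etale in codimension one) fully $G$-equivariant and controlling how the dynamical rank distributes over the factors is where the per-factor rank bounds and \cref{thm:Zhang} do the real work; and for the MRC fibration one must use \cref{Zha-Hu1} carefully to see that the base always carries a maximal-rank action even though the induced action on $Z$ need not a priori be of positive entropy, after which the base change has to be performed so that $\pi'$ is identified $G$-equivariantly and birationally with the MRC fibration of $X'$. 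The lower-dimensional inputs are classical surface theory, so keeping the whole argument and its covers equivariant --- as in \cite{NZ10,Nakayama10} --- is the technical heart.
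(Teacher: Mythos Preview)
Your overall strategy matches the paper's, but there are two genuine gaps.

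In the case $\kappa(X)=1$: your inequality $n-2=\rank(G|_F)\le n-k-1$ with $k=1$ reads $n-2\le n-2$, which is an equality for \emph{every} $n\ge 3$, not just $n=3$. When $n\ge 4$ the fiber $F$ has dimension $n-1\ge 3$, and the pair $(F,G|_F)$ satisfies \hyperref[hyp]{\rm Hyp$(n-1,n-2)$} with maximal dynamical rank; here you must invoke \cref{thm:Zhang} (using that $\kappa(F)=0$ so $F$ is not rationally connected) to conclude that $F$ is $G$-equivariantly birational to a Q-abelian variety. The surface classification via \cite{Cantat99} only handles $n=3$.

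In the case $\kappa(X)=0$: you invoke the singular Beauville--Bogomolov decomposition of H\"oring--Peternell and assert that a finite-index subgroup of $G$ lifts to and preserves the splitting. The paper explains precisely why this step is not available in the singular setting (see \cref{rmk:singular-BB}): the existence of a \emph{minimal} splitting cover, to which automorphisms lift by universality, depends on the finiteness of fundamental groups of the Calabi--Yau factors, which is unknown for singular Calabi--Yau varieties. The paper instead uses the weaker two-factor decomposition $\widetilde X\isom Z\times A$ with $Z$ weak Calabi--Yau and $A$ abelian of dimension $\widetilde q(X)$ (\cref{lemma:lifting}), built from the Albanese closure (\cref{lemma:alb-closure}) and Kawamata's \'etale fiber-bundle theorem; \emph{its} universal property does lift automorphisms. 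When $\dim Z\ge 3$ one then applies \cref{thm:Zhang} to $(Z,G|_Z)$ to make $Z$ Q-abelian and absorb it into the abelian factor. Your per-factor rank budget with IHS pieces is exactly the smooth-case argument of \cref{rmk:kappa=0}, not one that goes through for klt $X$.
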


\begin{rmk}
\label{rmkA}
\begin{enumerate}[(1)]
\item %In the case $\kappa(X) = 0$, the klt singularity assumption is used in an Abundance theorem by Nakayama \cite{Nakayama04} to reduce $K_X \num 0$ to $K_X \sim_\bQ 0$; see \cref{rmk:Nakayama}(1).
In the case $\kappa(X)=0$, if we merely assume that $X$ has only klt singularities, then the good minimal model program predicts the existence of a minimal model $X_m$ of $X$ so that $K_{X_m} \sim_\bQ 0$.
Modulo this, one then has to consider the induced birational (not necessarily biregular) action of $G$ on $X_m$.
Note that in the maximal dynamical rank case, Zhang managed to achieve this by proving that certain LMMP with scaling terminates $G$-equivariantly (see \cite[Proposition~3.11]{Zhang16}).
It is not clear to us that in our setting we can still run a similar $G$-equivariant LMMP with scaling.
The main obstruction is the absence of a nef and big $\bR$-divisor $A$ as essentially constructed in \cite{DS04}, which plays a crucial role in the proof of \cite[ibid.]{Zhang16}.
On the other hand, the induced birational action of $G$ on $X_m$ turns out to be isomorphic in codimension one, i.e., $G|_{X_m}$ is a subgroup of the so-called pseudo-automorphism group $\PsAut(X_m)$ of $X_m$.
It is thus more natural to study the dynamical property of a group $G$ of pseudo-automorphisms of a general $X$.
\item For a normal projective variety $X$, the following is well known:
\begin{equation*}
X \text{ is rationally connected} \Longrightarrow X \text{ is uniruled} \Longrightarrow \kappa(X) = -\infty.
\end{equation*}
However, the implication ``$\kappa(X) = -\infty \Longrightarrow X$ is uniruled" is unknown and turns out to be closely related to one of the most important conjectures in birational geometry, namely, the Non-vanishing conjecture (cf.~\cite[Conjecture~2.1]{BCHM10}; see also \cite[Conjecture~0.1]{BDPP13}).
This is the reason that we assume $X$ to be uniruled in \cref{thmA}(3).
\item Admittedly, the result of our \cref{thmA} does not present a complete characterization due to those technical assumptions. However, using the similar idea, we are able to reduce the general positive Kodaira dimension case to the Kodaira dimension zero case; see \cref{rmk:pos-kappa} for details.
\end{enumerate}
\end{rmk}

%%%%%%%%%%%%%%%%%%%%%%%%%%%%%%%%%%%%%%%%%%%%%%%%%%%%%%%%%%

\section{Preliminaries}
\label{sec-prelim}

%%%%%%%%%%%%%%%%%%%%%%%%%%%%%%%%%%%%%%%%%%%%%%%%%%%%%%%%%%

\noindent
Throughout this section, unless otherwise stated, $X$ is a normal projective variety of dimension $n$ defined over $\bC$.

We refer to Koll\'ar--Mori \cite{KM98} for the standard definitions, notation, and terminologies in birational geometry. For instance, see \cite[Definitions~2.34 and 5.8]{KM98} for the definitions of canonical, Kawamata log terminal (klt), rational, and log canonical (lc) singularities.

The {\it Kodaira dimension} $\kappa(W)$ of a smooth projective variety $W$ is defined as the Kodaira--Iitaka dimension $\kappa(W, K_W)$ of the canonical divisor $K_W$.
The Kodaira dimension of a singular variety is defined to be the Kodaira dimension of any smooth model.

We say that $X$ is {\it uniruled}, if there is a dominant rational map $\bbP_\bC^1 \times Y \ratmap X$ with $\dim Y = n - 1$.
We call $X$ {\it rationally connected}, in the sense of Campana \cite{Campana92} and Koll\'ar--Miyaoka--Mori \cite{KMM92}, if any two general points of $X$ can be connected by an irreducible rational curve on $X$; when $X$ is smooth, this is equivalent to saying that any two points of $X$ can be connected by an irreducible rational curve (see e.g., \cite[IV.3]{Kollar96}).

A fundamental result about rationally connected varieties is arguably the existence of the maximal rationally connected fibration (MRC fibration for short) constructed by Campana \cite{Campana92} and Koll\'ar--Miyaoka--Mori \cite{KMM92}.
Roughly speaking, for any given variety $X$, there exists a dominant rational map $\pi \colon X \ratmap Z$ (unique up to birational equivalence) characterized by the following properties:
\begin{itemize}
\item Rational connectivity: The general fibers of $\pi$ are rationally connected.
\item Maximality: Almost all rational curves in $X$ lie in the fibers. Namely, for a very general point $z\in Z$, if $C$ is a rational curve on $X$ meeting the fiber $X_z$, then $C\subseteq X_z$.
\end{itemize}
The above rational map $\pi$ and the variety $Z$ are unique up to birational equivalence and are called the {\it MRC fibration} and the {\it MRC quotient} of $X$, respectively.
A deep result due to Graber--Harris--Starr asserts that $Z$ is non-uniruled (see \cite[Corollary~1.4]{GHS03}).
Hence, $Z$ is a point if and only if $X$ is rationally connected.
The MRC fibration is particularly useful when our variety $X$ is uniruled but not rationally connected, since in this situation the MRC fibration is a non-trivial rational fibration (with $0<\dim Z<\dim X$).
Later, in \cref{subsec:SMRC}, we will encounter the {\it special} MRC fibration constructed by Nakayama \cite{Nakayama10}.

We now give the formal definition of Q-abelian varieties.

\begin{definition}[{\cite[Definition 2.13]{NZ10}}]
\label{def:Q-abel}
%A morphism $\pi \colon Y \lra X$ of normal projective varieties is called {\it quasi-\'etale}, if it is finite, surjective, and \'etale in codimension one (i.e., there exists a closed subset $Z \subset Y$ of codimension $\ge 2$ such that $\pi|_{Y\setminus Z} \colon Y\setminus Z \lra X$ is \'etale).
A normal projective variety $X$ is called {\it Q-abelian}, if there are an abelian variety $A$ and a finite surjective morphism $A \lra X$ which is \'etale in codimension one.
\end{definition}

In general, given a $G$-action on an algebraic variety $V$, i.e., there is a group homomorphism $G \lra \Aut(V)$, we denote by $G|_V$ the image of $G$ in $\Aut(V)$.
The action of $G$ on $V$ is {\it faithful}, if $G \lra \Aut(V)$ is injective.

Let $G$ be a subgroup of the automorphism group $\Aut(X)$ of $X$.
A rational map $\pi \colon X \ratmap Y$ is called {\it $G$-equivariant} if the $G$-action on $X$ descends to a biregular (possibly non-faithful) action on $Y$.
In other words, for each $g_X \in G$, there is an automorphism $g_Y$ of $Y$ such that $\pi \circ g_X = g_Y \circ \pi$.
We hence denote by $G|_Y$ the image of $G$ in $\Aut(Y)$.

\subsection{Weak decomposition}
\label{subsec:weak-decomp}

The famous Bogomolov--Beauville decomposition theorem asserts that for any compact K\"ahler manifold with numerically trivial canonical bundle, there is a finite \'etale cover that can be decomposed as a product of a torus, Calabi--Yau manifolds, and irreducible holomorphic symplectic manifolds (see \cite{Beauville83}).
Recently, this has been very successfully generalized to normal projective varieties with only klt singularities and numerically trivial canonical divisors by H\"oring and Peternell \cite{HP19}, based on the previous significant work by Druel \cite{Druel18}, Greb, Guenancia, Kebekus and Peternell \cite{GKP16a,GGK19}.
However, in this note, instead of utilizing their strong decomposition theorem, we shall work on a weaker version due to Kawamata \cite{Kawamata85} and developed by Nakayama--Zhang \cite{NZ10};
see \cref{rmk:singular-BB} for a brief explanation.

We begin with the definition of the so-called augmented irregularity.
Note that the irregularity of normal projective varieties is generally not invariant under \'etale in codimension one covers.

\begin{definition}[Augmented irregularity]
\label{def:aug-q}
Let $X$ be a normal projective variety. The {\it irregularity} of $X$ is defined by $q(X) \coloneqq h^1(X, \sO_X)$, where $\sO_X$ stands for the sheaf of rings of regular functions on $X$.
The {\it augmented irregularity} $\widetilde{q}(X)$ of $X$ is defined as the supremum of $q(Y)$ of all normal projective varieties $Y$ with finite surjective morphisms $Y\lra X$, \'etale in codimension one. Namely,
\[
\widetilde{q}(X) \coloneqq \sup \big\{ q(Y) : Y\to X \text{ is finite surjective and \'etale in codimension one} \big\} .
\]
\end{definition}

\begin{remark}
\label{rmk:aug-q}
\begin{enumerate}[(1)]
\item Let $X$ be a normal projective variety with only klt singularities such that $K_X \sim_\bQ 0$.
Then $\widetilde{q}(X) \le \dim X$.
Also, $q(X) = \dim X$ if and only if $X$ is an abelian variety.
It follows that $X$ is Q-abelian if and only if $\widetilde{q}(X) = \dim X$.
See \cite[Proposition~2.10]{NZ10}.
\item The augmented irregularity is invariant under \'etale in codimension one covers.
Namely, if $Y \lra X$ is \'etale in codimension one, then $\widetilde{q}(Y) = \widetilde{q}(X)$.
Clearly, $\widetilde{q}(Y) \le \widetilde{q}(X)$ by the definition.
On the other hand, by the base change any two \'etale in codimension one covers of $X$ is dominated by a third one so that $\widetilde{q}(Y) \ge \widetilde{q}(X)$.
\end{enumerate}
\end{remark}

\begin{definition}[Weak Calabi--Yau variety]
\label{def:wCY}
A normal projective variety $X$ is called a {\it weak Calabi--Yau variety}, if
\begin{itemize}
\item $X$ has only canonical singularities,
\item the canonical divisor $K_X \sim 0$, and
\item the augmented irregularity $\widetilde{q}(X) = 0$.
\end{itemize}
\end{definition}

\begin{remark}
\label{rmk:wCY}
\begin{enumerate}[(1)]
\item Our notion of weak Calabi--Yau may not be standard in the literature, as often for smooth varieties it only requires the irregularity to be zero.
%(see e.g., \cite[section~3.4]{LOP18}).
However, our weak Calabi--Yau varieties appear naturally in the singular Bogomolov--Beauville decomposition of klt varieties with numerically trivial canonical divisors (see e.g., \cite[Theorem~1.3]{GKP16b}).
\item Note that a two-dimensional weak Calabi--Yau variety is exactly a normal projective surface with du Val singularities such that its minimal resolution is a K3 surface and that there is no finite surjective morphism from any abelian surface.
\item It is also worth mentioning that those smooth Calabi--Yau threefolds of quotient type A or K in the sense of \cite{OS01} are, however, not weak Calabi--Yau according to the above definition.
See also \cite[\S14.2]{GGK19}.
It is a natural question whether the topological fundamental group $\pi_1(X)$ of a weak Calabi--Yau variety $X$ is finite; one can also ask a similar question for the \'etale fundamental group $\widehat\pi_1(X_{\reg})$ of the smooth locus $X_{\reg}$ of $X$.
\end{enumerate}
\end{remark}

As we do not treat actual pairs, the variety $X$ being klt is the same as being log terminal. Henceforth, we do not distinguish them.

\begin{lemma}[{cf.~\cite[Lemma 2.12]{NZ10}}]
\label{lemma:alb-closure}
Let $X$ be a normal projective variety with only klt singularities such that $K_X \sim_\bQ 0$.
Then there exists a finite surjective morphism $\tau \colon X^{\alb} \lra X$ satisfying the following conditions, uniquely up to isomorphism over $X$:
\begin{itemize}
\item[(1)] $\tau$ is \'etale in codimension one.
\item[(2)] $\widetilde{q}(X) = q(X^{\alb})$.
\item[(3)] $\tau$ is Galois.
\item[(4)] If $\tau' \colon X' \lra X$ is any finite surjective morphism satisfying the conditions {\em (1)} and {\em (2)}, then there exists a finite surjective morphism $\sigma \colon X' \lra X^{\alb}$, \'etale in codimension one, such that $\tau' = \tau \circ \sigma$.
\end{itemize}
\end{lemma}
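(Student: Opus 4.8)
\emph{Overview of the plan.} The strategy is to realize $X^{\alb}$ as the \emph{smallest} quasi-\'etale cover of $X$ attaining the augmented irregularity, and to pin it down using the Albanese morphisms of Galois quasi-\'etale covers. (Throughout, ``quasi-\'etale'' abbreviates ``finite surjective and \'etale in codimension one''; I freely use \cref{rmk:aug-q}, by which any quasi-\'etale cover of a normal projective klt variety with $K\sim_\bQ 0$ is again of this kind and has the same augmented irregularity.) Put $m\coloneqq\widetilde q(X)$; by \cref{rmk:aug-q}(1) one has $m\le n$, so $m$ is the maximum of a set of integers in $\{0,\dots,n\}$ and is therefore attained by some quasi-\'etale cover $Y_0\to X$ with $q(Y_0)=m$. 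I would then replace $Y_0$ by the Galois closure of $Y_0\to X$: this is still quasi-\'etale, because over the open $U\subseteq X$ on which $X$ is smooth and $\codim(X\setminus U)\ge 2$, the Galois closure of the finite \'etale cover $Y_0|_U\to U$ is again finite \'etale and normalization extends it; and it still has $q=m$ since $m=q(Y_0)\le q(Y)\le\widetilde q(Y)=m$. This yields a Galois quasi-\'etale cover $f\colon Y\to X$ with $q(Y)=m$; write $\Gamma\coloneqq\Gal(Y/X)$ and let $S$ be the set of all quasi-\'etale covers $Y'\to X$ with $q(Y')=m$.

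\emph{Step 1: the Albanese criterion for subcovers.} Since $Y$ has rational singularities, the Albanese morphism $a\colon Y\to A\coloneqq\Alb(Y)$ exists, $\dim A=q(Y)=m$, and $a^{*}\colon H^{0}(A,\Omega^{1}_{A})\xrightarrow{\,\sim\,}H^{0}(Y,\Omega^{[1]}_{Y})$; functoriality endows $A$ with a $\Gamma$-action through $\Aut(A)=A\rtimes\Aut^{\circ}(A)$ making $a$ equivariant. For a subgroup $H\le\Gamma$ the quotient $Y/H$ is again klt with $K\sim_\bQ 0$, and $q(Y/H)$ equals the dimension of the $H$-invariants in $H^{0}(Y,\Omega^{[1]}_{Y})\cong H^{0}(A,\Omega^{1}_{A})$, on which $H$ acts only through its ``linear part'' $\Gamma\to\Aut^{\circ}(A)$. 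Hence $q(Y/H)=m$ if and only if $H\le\Gamma^{\ell}\coloneqq\ker(\Gamma\to\Aut^{\circ}(A))$, a normal subgroup of $\Gamma$ that acts on $A$ by translations. Consequently $Y^{\star}\coloneqq Y/\Gamma^{\ell}$ is the smallest subcover of $Y$ lying in $S$; it is Galois over $X$ (with group $\Gamma/\Gamma^{\ell}$), and $q(Y^{\star})=m$, because $a$ descends to a morphism $Y^{\star}\to A/\Gamma^{\ell}$ onto an $m$-dimensional abelian variety and pulling back $1$-forms gives $q(Y^{\star})\ge m$.

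\emph{Step 2: independence of $Y$, and conditions (1)--(3).} I would next show that $Y^{\star}$, up to isomorphism over $X$, is independent of the chosen Galois member $Y\in S$. Given two such, $Y_1$ and $Y_2$, pick a Galois member $Y_3\in S$ dominating both (e.g.\ the Galois closure of a dominant component of the normalized fibre product; it remains in $S$ since $q$ cannot exceed $m$). Writing $\pi\colon\Gamma_3\twoheadrightarrow\Gamma_1$ for the induced surjection, the map $\Alb(Y_3)\to\Alb(Y_1)$ is an isogeny equivariant over $\pi$; since conjugation by an isogeny of abelian varieties preserves the property of having trivial linear part, $\Gamma_3^{\ell}=\pi^{-1}(\Gamma_1^{\ell})$, and therefore $\bC(Y_3^{\star})=\bC(Y_3)^{\Gamma_3^{\ell}}=\bC(Y_1)^{\Gamma_1^{\ell}}=\bC(Y_1^{\star})$ inside $\bC(Y_3)$; as each $Y_i^{\star}$ is the normalization of $X$ in this field, $Y_1^{\star}\isom Y_2^{\star}$ over $X$. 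I then set $X^{\alb}\coloneqq Y^{\star}$ with the induced $\tau\colon X^{\alb}\to X$: by construction $\tau$ is \'etale over $U$, so \emph{(1)} holds; $q(X^{\alb})=m=\widetilde q(X)$ gives \emph{(2)}; and $\tau$ is Galois, giving \emph{(3)}.

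\emph{Step 3: universality (4), uniqueness, and the main difficulty.} For condition \emph{(4)}, let $\tau'\colon X'\to X$ be any quasi-\'etale cover with $q(X')=m$. Taking $Y$ to be the Galois closure of $X'\to X$ (a Galois member of $S$), Step 2 gives $X^{\alb}\isom Y/\Gamma^{\ell}$, while $X'=Y/H'$ with $H'=\Gal(Y/X')$; since $q(Y/H')=m$, Step 1 forces $H'\le\Gamma^{\ell}$, yielding the factorization $\sigma\colon X'=Y/H'\to Y/\Gamma^{\ell}=X^{\alb}$ over $X$, and $\sigma$ is \'etale in codimension one, being a morphism of finite \'etale covers of $U$. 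Uniqueness is then formal: if $T_1,T_2$ both satisfy \emph{(1)}--\emph{(4)}, applying \emph{(4)} twice produces $X$-morphisms $T_1\to T_2$ and $T_2\to T_1$ whose composites are $X$-endomorphisms of the normal varieties $T_i$ finite over $X$, hence isomorphisms, so $T_1\isom T_2$ over $X$. The step I expect to demand the most care is Step 2: one must check that under a common Galois refinement the Albanese varieties and the induced Galois representations match up, so that the subgroup ``acting with trivial linear part'' transforms functorially — this is what makes $X^{\alb}$ well defined and is the technical heart of the argument. The supporting facts (existence of the Albanese and $q=h^{0}(\Omega^{[1]})$ for rational singularities, the bound $\widetilde q\le\dim$ from \cref{rmk:aug-q}, and the purity input controlling Galois closures of quasi-\'etale covers) are standard but should be invoked with care.
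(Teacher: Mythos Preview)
The paper does not supply a proof of this lemma; it simply quotes it from \cite[Lemma~2.12]{NZ10}. Your argument is correct and is essentially the standard one: realize $X^{\alb}$ as $Y/\Gamma^{\ell}$ for any Galois quasi-\'etale cover $Y\to X$ with $q(Y)=\widetilde q(X)$, where $\Gamma^{\ell}\trianglelefteq\Gamma=\Gal(Y/X)$ is the subgroup acting on $\Alb(Y)$ by pure translations, and use the identification $q(Y/H)=\dim H^{0}(\Alb(Y),\Omega^{1})^{H}$ (valid because klt implies rational singularities, so $q=h^{0}(\Omega^{[1]})$) to verify both minimality and independence of the choice of $Y$. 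The technical points you flag---that Galois closure preserves quasi-\'etaleness via Zariski--Nagata purity, that the induced map $\Alb(Y_3)\to\Alb(Y_1)$ is an isogeny identifying the translation kernels, and that $q$ can only increase along quasi-\'etale covers---are precisely the ones that need care, and your handling of them is sound.
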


The above Galois cover $\tau$ is called the {\it Albanese closure of $X$ in codimension one} by Nakayama and Zhang;
a similar result for smooth projective varieties could be found in \cite{Beauville83}.
Here, the key point is that the universal property allows one to lift the group action to the Albanese closure.

\begin{lemma}[{cf.~\cite[Proposition 3.5]{NZ10}}]
\label{lemma:lifting}
Let $X$ be a normal projective variety with only klt singularities such that $K_X \sim_\bQ 0$, and $f$ an automorphism of $X$.
Then there exist a morphism $\pi \colon \widetilde{X} \lra X$ from a normal projective variety $\widetilde{X}$,
an automorphism $\widetilde{f}$ of $\widetilde{X}$ such that the following conditions hold.
\begin{itemize}
\item[(1)] $\pi$ is finite surjective and \'etale in codimension one.
\item[(2)] $\widetilde{X}$ is isomorphic to the product variety $Z \times A$ for a weak Calabi--Yau variety $Z$ (see \cref{def:wCY}) and an abelian variety $A$.
\item[(3)] The dimension of $A$ equals the augmented irregularity $\widetilde{q}(X)$ of $X$.
\item[(4)] There are automorphisms $\widetilde{f}_Z$ and $\widetilde{f}_A$ of $Z$ and $A$, respectively, such that the following diagram commutes:
\[
\xymatrix{
X \ar[d]_{f} & & \widetilde{X} \ar[ll]_{\pi} \ar[d]_{\widetilde{f}} \ar[rr]^{\isom} & & Z \times A \ar[d]^{\widetilde{f}_Z \times \widetilde{f}_A} \\
X & & \widetilde{X} \ar[ll]_{\pi} \ar[rr]^{\isom} & & Z \times A .
}
\]
%In particular, for any subgroup $G \le \Aut(X)$, the $G$-action on $X$ extends to a faithful $G$-action on $W$, denoted by $G|_W$, then splits as $G|_Z \times G|_A$.
\end{itemize}
\end{lemma}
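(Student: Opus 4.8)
The plan is to deduce the statement from its non-equivariant version, which is precisely \cite[Proposition~3.5]{NZ10} (ultimately a consequence of Kawamata's structure theorem \cite{Kawamata85} for the Albanese map of a klt variety with numerically trivial canonical divisor), by carrying the automorphism $f$ through each step of that construction. \emph{First}, I would lift $f$ to the Albanese closure. Applying \cref{lemma:alb-closure} yields $\tau\colon X^{\alb}\to X$; since $f$ is an automorphism, $f\circ\tau$ is again finite, surjective, \'etale in codimension one, and has $q(X^{\alb})=\widetilde q(X)$, so it satisfies conditions (1)--(2) of \cref{lemma:alb-closure}. By the universal property (4) there is $\sigma\colon X^{\alb}\to X^{\alb}$, finite and \'etale in codimension one, with $f\circ\tau=\tau\circ\sigma$; comparing degrees forces $\deg\sigma=1$, so $f^{\alb}\coloneqq\sigma$ is an automorphism of $W\coloneqq X^{\alb}$ with $\tau\circ f^{\alb}=f\circ\tau$. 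Note that $W$ is klt, $K_W\sim_\bQ\tau^{*}K_X\sim_\bQ 0$ (as $\tau$ is unramified in codimension one), and $q(W)=\widetilde q(W)=\widetilde q(X)$ by \cref{rmk:aug-q}.

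\emph{Next}, I would invoke the structure of the Albanese morphism $\alb_W\colon W\to A_0\coloneqq\Alb(W)$, so that $\dim A_0=q(W)=\widetilde q(X)$ and, by functoriality, $f^{\alb}$ descends to an automorphism $\phi$ of $A_0$ with $\alb_W\circ f^{\alb}=\phi\circ\alb_W$. By \cite[Proposition~3.5]{NZ10} (Kawamata), $\alb_W$ is surjective with connected fibres and, as a fibre bundle, becomes trivial after pulling back along a suitable isogeny of $A_0$, its fibre $Z$ being a weak Calabi--Yau variety (the equality $\widetilde q(Z)=0$ coming from $\widetilde q(W)=\dim A_0$). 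Since every finite \'etale cover of $A_0$ is dominated by a multiplication map, I may take the trivializing isogeny to be $[N]\colon A_0\to A_0$ for $N$ sufficiently divisible, and set $\widetilde X\coloneqq W\times_{A_0,\,[N]}A_0$ with $\pi\colon\widetilde X\xrightarrow{\,\mathrm{pr}_1\,}W\xrightarrow{\,\tau\,}X$; then $\pi$ is finite, surjective and \'etale in codimension one, and $\widetilde X\isom Z\times A$ with $A\coloneqq A_0$ the second factor, which establishes (1)--(3). To lift $\phi$, write $\phi=t_b\circ h$ with $h$ a group automorphism of $A_0$ and $t_b$ translation by $b$, pick $b'$ with $Nb'=b$, and put $\psi\coloneqq t_{b'}\circ h\in\Aut(A_0)$; then $[N]\circ\psi=\phi\circ[N]$, so $f^{\alb}\times\psi$ preserves the fibre product $\widetilde X$ and defines $\widetilde f\in\Aut(\widetilde X)$ with $\pi\circ\widetilde f=f\circ\pi$ and $\mathrm{pr}_A\circ\widetilde f=\psi\circ\mathrm{pr}_A$.

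\emph{Finally}, I would check that $\widetilde f$ respects the product structure. Under $\widetilde X\isom Z\times A$ the automorphism $\widetilde f$ is compatible with the projection to $A$, inducing $\psi$; its restriction to the fibre over $a\in A$ is an isomorphism $Z\times\{a\}\to Z\times\{\psi(a)\}$, so $a\mapsto(\widetilde f|_{Z\times\{a\}})$ defines a morphism from $A$ to $\Aut(Z)$. Since a weak Calabi--Yau variety carries no nonzero global vector field---for instance, by the singular Beauville--Bogomolov decomposition it is, after an \'etale cover, a product of strict Calabi--Yau and irreducible holomorphic symplectic varieties---one has $\Aut^{0}(Z)=\{\id\}$, so this morphism from the complete variety $A$ is constant. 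Hence $\widetilde f=\widetilde f_Z\times\widetilde f_A$ with $\widetilde f_Z\in\Aut(Z)$ and $\widetilde f_A\coloneqq\psi\in\Aut(A)$, which gives the commutative diagram of (4).

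The crux is the input quoted in the second step: that $\alb_W$ is an \'etale-locally trivial bundle whose fibre is weak Calabi--Yau, which is the substance of Kawamata's theorem together with \cite[Proposition~3.5]{NZ10} (including the delicate point that the fibre has at worst canonical singularities and vanishing augmented irregularity). Granting this, the equivariance reduces to the routine bookkeeping above---replacing the trivializing isogeny by a multiplication map $[N]$ and choosing the division point $b'$---while the product form of $\widetilde f$ is forced by the absence of vector fields on $Z$.
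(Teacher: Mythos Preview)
Your argument follows the paper's route almost exactly: lift $f$ to the Albanese closure via its universal property, apply Kawamata's fibre-bundle theorem, trivialize after a multiplication map $[N]$, construct $\widetilde f$ on the fibre product, and then split it. Two differences are worth flagging.

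First, the paper inserts a preliminary step you omit: before taking the Albanese closure, it passes to the global index-one cover, so that one may assume $X$ has \emph{canonical} (not merely klt) singularities and $K_X\sim 0$ (linear, not just $\bQ$-linear). This is what ensures the fibre $Z$ meets \cref{def:wCY}. You acknowledge this ``delicate point'' and defer it to \cite[Proposition~3.5]{NZ10}, but since you are spelling out the construction rather than invoking that proposition as a black box, the step is missing from your write-up; as written, your $W$---and hence $Z$---is only klt with $K_Z\sim_\bQ 0$. The fix is easy (the index-one cover is unique, so $f$ lifts), but it should appear before the Albanese closure.

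Second, for the splitting of $\widetilde f$ you argue that $a\mapsto\widetilde f|_{Z\times\{a\}}$ is constant because $\Aut^0(Z)=\{\id\}$, which you justify via the singular Beauville--Bogomolov decomposition. The paper instead invokes \cref{lemma:split} (\cite[Lemma~2.14]{NZ10}), which needs only that $Z$ is nonruled with rational singularities and $q(Z)=0$---a much lighter hypothesis than the full strength of H\"oring--Peternell et al. Both routes work, but the paper's is more self-contained; indeed, \cref{rmk:singular-BB} explains why the authors deliberately avoid leaning on the singular decomposition theorem.
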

\begin{proof}
For the convenience of the reader, we sketch their proof as follows.
First, let us take the global index-one cover $X_1 \lra X$, which is a finite surjective morphism and \'etale in codimension one, such that $X_1$ has only canonical singularities with $K_{X_1} \sim 0$ (see \cite[Definition~5.19]{KM98}).
The uniqueness of the global index-one cover asserts that the automorphism $f$ can be lifted to an automorphism $f_1$ on $X_1$.
So at the expense of replacing $(X, f)$ by $(X_1, f_1)$, we may assume that $X$ has only canonical singularities with $K_X \sim 0$.

Next, let $\tau \colon X^{\alb} \lra X$ be the Albanese closure of $X$ in codimension one, whose existence is guaranteed by \cref{lemma:alb-closure}.
It thus follows from the universal property of $\tau$ that we can lift $f$ to an automorphism $f^{\alb}$ on $X^{\alb}$.
More precisely, applying \cref{lemma:alb-closure}(4) to $f \circ \tau$, there exists a finite surjective morphism $f^{\alb} \colon X^{\alb} \lra X^{\alb}$ such that $f \circ \tau = \tau \circ f^{\alb}$.
Clearly, $f^{\alb}$ is an automorphism since so is $f$.
Therefore, replacing $(X, f)$ by $(X^{\alb}, f^{\alb})$ if necessary, we may assume further that $\widetilde{q}(X) = q(X)$.

Note that the augmented irregularity is invariant under \'etale in codimension one covers; see e.g., \cref{rmk:aug-q}(2).
Hence, the above $\widetilde{q}(X)$ is indeed equal to the augmented irregularity of the original $X$, even though we have replaced our $X$ by new models.

Now, under the above assumptions, the Albanese morphism $\alb_X \colon X \lra A \coloneqq \Alb(X)$ turns out to be an \'etale fiber bundle, i.e., there is an isogeny $\phi \colon B \lra A$ such that 
$X \times_A B \isom Z \times B$,
where $Z$ is a fiber of $\alb_X$ (see \cite[Theorem~8.3]{Kawamata85}).
Without loss of generality, we may assume that $\dim A = q(X) > 0$ (for otherwise, $X$ is a weak Calabi--Yau variety).
%It is not hard to verify that $Z$ is a weak Calabi--Yau variety by its definition and also the definition of the augmented irregularity $\widetilde{q}(X)$ (see e.g., the proof of \cite[Proposition~2.10(3)]{NZ10}).
Clearly, there is an induced automorphism of $A$ by the universal property of the Albanese morphism $\alb_X$; denote it by $f_A$.
If $\dim Z = 0$, then $\widetilde{q}(X) = q(X) = \dim A = \dim X$ so that $X$ is an abelian variety (isogenous to $A$); see \cref{rmk:aug-q}(1).
We are also done in this case.
So, let us assume that $0 < \dim Z < \dim X$.
Note that $Z$ has only canonical singularities with $K_Z \sim 0$.
It is not hard to see that $Z$ is a weak Calabi--Yau variety.
Indeed, if $\widetilde{q}(Z) > 0$, then by applying the same argument above to $Z$, there exists a finite surjective morphism $B_0 \times Z_0 \lra Z$ \'etale in codimension one, where $B_0$ is an abelian variety of dimension $\widetilde{q}(Z)>0$.
This gives another finite surjective morphism $B \times B_0 \times Z_0 \lra X$ \'etale in codimension one, from which we have
\[
\widetilde{q}(X) = q(X) = \dim B < \dim B + \dim B_0 \le q(B \times B_0 \times Z_0) \le \widetilde{q}(X),
\]
a contradiction.

Lastly, take an isogeny $\psi \colon A \lra B$ further so that $\phi \circ \psi = [m_A]$ is just the multiplication-by-$m$ map on $A$ for some positive integer $m$.
Then there is an automorphism $\widetilde{f}_A$ of $A$ such that $[m_A] \circ \widetilde{f}_A = f_A \circ [m_A]$.
Consider the new fiber product $\widetilde{X} \coloneqq X \times_A A$ of $\alb_X \colon X \lra A$ and $[m_A] \colon A \lra A$.
Let $\pi \colon \widetilde{X} \lra X$ denote the finite \'etale cover induced from the first projection.
Then $\widetilde{X} \isom Z \times A$ for the same fiber $Z$ of $\alb_X$ as above.
It is clear that those automorphisms $f$, $f_A$ and $\widetilde{f}_A$ induce an automorphism $\widetilde{f}$ on $\widetilde{X}$ satisfying that $\pi \circ \widetilde{f} = f \circ \pi$.
Note that as a weak Calabi--Yau variety, $Z$ is nonruled and has only canonical (and hence rational by \cite[Theorem~5.22]{KM98}) singularities, and its augmented irregularity $\widetilde{q}(Z)$ vanishes.
It thus follows from \cref{lemma:split} below that the induced automorphism of $\widetilde{f}$ on $Z \times A$ splits as $\widetilde{f}_Z \times \widetilde{f}_A$.
In other words, we have the following commutative diagram endowed with equivariant group actions:
\[
\begin{tikzcd}
Z \times A  \arrow[loop above, "\widetilde{f}_Z \times \widetilde{f}_A"]  \ar[rr, "\isom"] \ar[rrdd, bend right=45] & & \widetilde{X} \arrow[loop above, "\widetilde{f}"] \ar[rr] \ar[d] \ar[dd, "\pi"', bend right=70] & & A \arrow[loop, "\widetilde{f}_A", distance=15, out=60, in=30] \ar[d, "\psi"'] \ar[dd, "\text{$[m_A]$}", bend left=70] \\
& & X \times_A B \ar[rr] \ar[d] & & B \ar[d, "\phi"'] \\
& & \arrow[loop, "f"', distance=15, out=210, in=240] X \ar[rr, "\alb_X"] & & A \arrow[loop, "f_A", distance=15, out=-30, in=-60]
\end{tikzcd}.
\]
Finally, in view of the Albanese morphism $\alb_X$, we see that $\dim A = q(X) = \widetilde{q}(X)$.
\end{proof}

\begin{remark}
\label{rmk:Nakayama}
\begin{enumerate}[(1)]
\item In the above lemma, by Nakayama's celebrated result on the Abundance conjecture in the Kodaira dimension zero case (see \cite[Corollary~V.4.9]{Nakayama04}), we can replace the condition ``$K_X \sim_\bQ 0$" by ``$K_X \num 0$".
When $X$ has only canonical singularities, this was originally due to Kawamata \cite[Theorem~8.2]{Kawamata85}.
\item For any subgroup $G \le \Aut(X)$, the action of $G$ on $X$ extends to a faithful action on $\widetilde{X}$, denoted by $\widetilde{G}$, which then splits as a subgroup of $\widetilde{G}|_Z \times \widetilde{G}|_A$ by the following lemma.
Note that the action of $G$ on $X$ can be identified with a not necessarily faithful action of $\widetilde{G}$ on $X$ (with finite kernel).
If $G\isom \bZ^r$ which is always the case in this article, we can apply \cite[Lemma~2.4]{Zhang13} so that a finite-index subgroup of $\widetilde{G}$ also acts faithfully on $X$.
\end{enumerate}
\end{remark}

Below is a simple variant of Nakayama and Zhang's splitting criterion for automorphisms of certain product varieties.

\begin{lemma}[{cf.~\cite[Lemma 2.14]{NZ10}}]
\label{lemma:split}
Let $Z$ be a nonruled normal projective variety with only rational singularities, and $A$ an abelian variety.
Suppose that $q(Z) = 0$.
Then any automorphism $f$ of $Z \times A$ splits, i.e., there are suitable automorphisms $f_Z$ and $f_A$ of $Z$ and $A$, respectively, such that $f = f_Z \times f_A$.
%In particular, $\Aut(Z \times A) \isom \Aut(Z) \times \Aut(A)$.
\end{lemma}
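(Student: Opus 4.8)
The plan is to use the Albanese morphism to split off the abelian factor, and then to show that the residual ``twist'' of $f$ along $A$ is constant. Write $p_Z\colon Z\times A\to Z$ and $p_A\colon Z\times A\to A$ for the two projections. First I would exploit $q(Z)=0$ to pin down the Albanese of $Z\times A$: since $Z$ is normal with rational singularities, $q(Z)=h^1(Z,\sO_Z)$ coincides with $h^1$ of a resolution, and a rational map from a normal variety to an abelian variety is a morphism, so the Albanese morphism $\alb_Z\colon Z\to\Alb(Z)$ exists with $\dim\Alb(Z)=q(Z)=0$; thus $\Alb(Z)$ is a point. By the K\"unneth formula and the universal property of the Albanese, $\Alb(Z\times A)\isom\Alb(Z)\times\Alb(A)\isom A$ and $\alb_{Z\times A}=p_A$ for suitable base points. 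Applying the universal property to $\alb_{Z\times A}\circ f$ and to $\alb_{Z\times A}\circ f^{-1}$ yields an automorphism $f_A$ of the variety $A$ (a translation composed with a group automorphism) with $p_A\circ f=f_A\circ p_A$. Hence $f$ carries the fiber $p_A^{-1}(a)=Z\times\{a\}$ isomorphically onto $p_A^{-1}(f_A(a))=Z\times\{f_A(a)\}$, so there are automorphisms $\psi_a$ of $Z$ with $f(z,a)=(\psi_a(z),f_A(a))$.

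Next I would reinterpret the family $(\psi_a)_{a\in A}$ scheme-theoretically. The composite $g\coloneqq(\id_Z\times f_A^{-1})\circ f$ is an automorphism of $Z\times A$ commuting with $p_A$, i.e.\ $g(z,a)=(\psi_a(z),a)$, so $g$ is an automorphism of $Z\times_\bC A$ over $A$. Since $Z$ is projective, its automorphism functor is represented by a group scheme $\aut(Z)$ locally of finite type over $\bC$, and $g$ corresponds to a morphism $\Psi\colon A\to\aut(Z)$, $a\mapsto\psi_a$. As $A$ is connected, $\Psi(A)$ is a connected complete subvariety of $\aut(Z)$ contained in a single coset $\psi_{a_0}\cdot\aut^0(Z)$ of the identity component.

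Finally I would show $\Psi$ is constant. The identity component $\aut^0(Z)$ acts on $\Alb(Z)$ through its translation subgroup $\aut^0(\Alb(Z))=\Alb(Z)$, and by the Nishi--Matsumura structure theorem the kernel of the resulting homomorphism $\aut^0(Z)\to\Alb(Z)$ is a linear algebraic group; since $\Alb(Z)$ is a point, $\aut^0(Z)$ itself is affine. (When $Z$ is nonruled, $\aut^0(Z)$ is moreover an abelian variety by Matsumura's theorem, so this forces it to be trivial.) Hence the coset $\psi_{a_0}\cdot\aut^0(Z)\isom\aut^0(Z)$ is affine, so the connected complete subvariety $\Psi(A)$ is a single point. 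Thus $\psi_a=\psi_{a_0}$ for all $a$, so $g=\psi_{a_0}\times\id_A$ and $f=(\id_Z\times f_A)\circ g=\psi_{a_0}\times f_A$; taking $f_Z\coloneqq\psi_{a_0}$ completes the proof.

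The hard part is the last step: one must rule out a nonconstant morphism from the abelian variety $A$ into $\Aut(Z)$, and this is precisely where the hypotheses are indispensable --- $q(Z)=0$ (together with $Z$ nonruled, or, more economically, the Nishi--Matsumura theorem) is what guarantees that $\aut^0(Z)$ carries no positive-dimensional complete subvariety. A secondary technical point is the Albanese bookkeeping under the rational-singularities assumption: one should check that $q(Z)=0$ really yields $\Alb(Z)=\{\mathrm{pt}\}$ and the splitting $\Alb(Z\times A)\isom A$ with $\alb_{Z\times A}=p_A$, and that $g$, being an automorphism of $Z\times A$ over $A$, genuinely defines an $A$-point of $\aut(Z)$.
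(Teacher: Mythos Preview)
The paper does not give its own proof of this lemma; it merely cites \cite[Lemma~2.14]{NZ10} and calls it ``a simple variant'' adapted to automorphisms. Your argument is correct and is the standard route one would expect in that reference: identify $p_A$ with the Albanese morphism of $Z\times A$ (using $q(Z)=0$ and rational singularities to control $\Alb(Z)$), obtain $f_A$ by functoriality, and then argue that the residual family $A\to\aut(Z)$, $a\mapsto\psi_a$, is constant because $\aut^0(Z)$ is trivial. The two hypotheses combine exactly as you say: nonruledness forces the linear part of $\aut^0(Z)$ to vanish (Matsumura), so $\aut^0(Z)$ is an abelian variety, while $q(Z)=0$ (via Nishi--Matsumura, i.e.\ the map $\aut^0(Z)\to\Alb(Z)$ has affine kernel) forces that abelian variety to be a point; hence no nonconstant morphism from the complete connected $A$ can land in a coset of $\aut^0(Z)$.

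The only point requiring a word of care, which you flag yourself, is that Matsumura's and Nishi--Matsumura's theorems are classically stated for smooth projective varieties. Both extend to normal projective varieties (see e.g.\ Brion's work on automorphism group schemes), and alternatively one can pass to a $\aut^0(Z)$-equivariant resolution, using rational singularities to preserve $q=0$. With that understood, your proof is complete and in line with the cited source.
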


\subsection{Special MRC fibration}
\label{subsec:SMRC}

In this subsection, we collect basic materials on the special MRC fibration introduced by Nakayama \cite{Nakayama10}.

\begin{definition}[Nakayama]
\label{def:sMRC}
Given a projective variety $X$,
a dominant rational map $\pi \colon X \ratmap Z$ is called the {\it special MRC fibration of $X$}, if it satisfies the following conditions:
\begin{itemize}
\item[(1)] The graph $\Gamma_\pi \subseteq X \times Z$ of $\pi$ is equidimensional over $Z$.
\item[(2)] The general fibers of $\Gamma_\pi \lra Z$ are rationally connected.
\item[(3)] $Z$ is a non-uniruled normal projective variety (see \cite{GHS03}).
\item[(4)] If $\pi' \colon X \ratmap Z'$ is a dominant rational map satisfying (1)--(3), then there is a birational morphism $\nu \colon Z' \lra Z$ such that $\pi = \nu \circ \pi'$.
\end{itemize}
\end{definition}

The existence and the uniqueness (up to isomorphism) of the special MRC fibration is proved in \cite[Theorem~4.18]{Nakayama10}.
One of the crucial advantages of the special MRC is the following descent property (see \cite[Theorem~4.19]{Nakayama10}).

\begin{lemma}
\label{lemma:sMRC}
Let $\pi \colon X \ratmap Z$ be the special MRC fibration, and $G \le \Aut(X)$.
Then $G$ descends to a biregular action on $Z$, denoted by $G|_Z$.
Moreover, there exist a birational morphism $p \colon W \lra X$ and an equidimensional surjective morphism $q \colon W \lra Z$ satisfying the following conditions:
\begin{itemize}
\item[(1)] $W$ is a normal projective variety.
\item[(2)] A general fiber of $q$ is rationally connected.
\item[(3)] Both $p$ and $q$ are $G$-equivariant.
\end{itemize}
%We hence have the following commutative diagram of normal projective varieties:
%\[\xymatrix{
%& & W \ar[lld]_{p} \ar[rrd]^{q} & & \\
%X \ar@{-->}[rrrr]^{\pi} & & & & Z \\ }
%\]
\end{lemma}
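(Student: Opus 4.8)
The plan is to separate the statement into two independent parts: the descent of the $G$-action to $Z$, which I expect to follow purely formally from the uniqueness clause Definition~\ref{def:sMRC}(4), and the construction of $(W,p,q)$, for which I would take $W$ to be the normalization of the graph $\Gamma_\pi$ of $\pi$ sitting inside $X\times Z$. With this choice, equivariance is built in, while equidimensionality of $q$ will be inherited directly from Definition~\ref{def:sMRC}(1).

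\emph{Descent.} For $g\in G$ I would examine the rational map $\pi'\coloneqq\pi\circ g^{-1}\colon X\ratmap Z$. Its graph is $(g\times\id_Z)(\Gamma_\pi)$, the image of $\Gamma_\pi$ under an automorphism of $X\times Z$, so $\pi'$ again has equidimensional graph over $Z$, its general fibres over $Z$ are isomorphic to those of $\Gamma_\pi\to Z$ (hence rationally connected), and the target is the same non-uniruled $Z$; thus $\pi'$ satisfies Definition~\ref{def:sMRC}(1)--(3). By part~(4) there is a birational morphism $\nu_g\colon Z\lra Z$ with $\pi=\nu_g\circ\pi'$, i.e.\ $\pi\circ g=\nu_g\circ\pi$. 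Applying the same to $g^{-1}$ and using that $\pi$ is dominant (so a morphism out of $Z$ is determined by its composite with $\pi$) gives $\nu_g\circ\nu_{g^{-1}}=\id_Z=\nu_{g^{-1}}\circ\nu_g$, hence $\nu_g\in\Aut(Z)$, and that $g\mapsto\nu_g$ is a group homomorphism $G\to\Aut(Z)$. This produces $G|_Z$ and makes $\pi$ $G$-equivariant.

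\emph{Construction of $W$.} I would set $\Gamma\coloneqq\Gamma_\pi\subseteq X\times Z$ and let $W\coloneqq\widetilde\Gamma$ be its normalization, with $p\colon W\lra X$ and $q\colon W\lra Z$ the composites of the normalization map with the two projections; then $W$ is normal and projective, which is (1). Since $\pi$ is $G$-equivariant, the automorphism $g\times\nu_g$ of $X\times Z$ preserves $\Gamma$ for every $g\in G$, hence lifts to the normalization $W$, and with respect to this action $p$ and $q$ are $G$-equivariant, which is (3). The first projection $\Gamma\to X$ is an isomorphism over the (normal, as $X$ is normal) locus where $\pi$ is a morphism, hence birational, and composing with the birational normalization shows $p$ is a birational morphism. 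The second projection $\Gamma\to Z$ is surjective since $\pi$ is dominant, and equidimensional by Definition~\ref{def:sMRC}(1); as $W\to\Gamma$ is finite surjective, $q$ is surjective and equidimensional as well.

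\emph{Main obstacle.} The step that I expect to require the most care is (2): checking that a general fibre of $q$ is still rationally connected after normalization. For general $z\in Z$ the fibre $\Gamma_z$ is rationally connected, hence integral, by Definition~\ref{def:sMRC}(2)--(3). Since $W\to\Gamma$ is an isomorphism over a dense open subset of $\Gamma$ that dominates $Z$, for general $z$ the restriction $W_z\to\Gamma_z$ is an isomorphism over a dense open; combining this with the fact that $q$ is equidimensional (so every component of $W_z$ has dimension $\dim\Gamma_z$ and therefore meets that dense open) shows $W_z$ is irreducible, reduced, and birational to $\Gamma_z$. Because rational connectedness is a birational invariant of projective varieties — one passes to resolutions of $W_z$ and $\Gamma_z$ — it follows that $W_z$ is rationally connected, giving (2). (If one wishes to bypass this fibrewise analysis, the full statement is Nakayama's \cite[Theorem~4.19]{Nakayama10}, the descent part having been isolated above.)
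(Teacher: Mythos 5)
Your proof is correct and follows essentially the same route as the paper: the paper also takes $W$ to be the normalization of the graph $\Gamma_\pi$, with $(2)$ and $(3)$ coming from \cref{def:sMRC} and the $G$-equivariance of $\pi$. The only difference is that where the paper simply cites \cite[Theorem~4.19]{Nakayama10} for the descent of $G$ to $Z$, you derive it directly from the universal property in \cref{def:sMRC}(4) (which is the same underlying mechanism), and you spell out the fibrewise details for $(2)$ that the paper leaves as ``readily follows.''
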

\begin{proof}
By \cite[Theorem~4.19]{Nakayama10}, $G$ descends to a biregular action on $Z$.
We take $W$ as the normalization of the graph $\Gamma_\pi$ of $\pi$ which admits a natural faithful $G$-action.
Then (2) follows readily from \cref{def:sMRC}, while (3) the $G$-equivariance of $\pi$.
\end{proof}

\begin{lemma}[{cf.~\cite[Lemma 4.4]{NZ10}}]
\label{lemma:sMRC-cover}
With notation as in \cref{lemma:sMRC}, let $\theta_Z \colon Z' \lra Z$ be a $G|_Z$-equivariant finite surjective morphism from a normal projective variety $Z'$.
Then there exist finite surjective morphisms $\theta_X \colon X' \lra X$ and $\theta_W \colon W' \lra W$,
a birational morphism $p' \colon W' \lra X'$,
and an equidimensional surjective morphism $q' \colon W' \lra Z'$ satisfying the following conditions:
\begin{itemize}
\item[(1)] Both $X'$ and $W'$ are normal projective varieties.
\item[(2)] A general fiber of $q'$ is rationally connected.
\item[(3)] $\pi' \coloneqq q' \circ p'^{-1}$ is $G$-equivariantly birational to the MRC fibration of $X'$.
\item[(4)] In the commutative diagram below, every morphism or rational map other than $\theta_Z$ is $G$-equivariant.
\end{itemize}
\[
\xymatrix{
& & W' \ar[lld] _{p'} \ar[rrd]^{q'} \ar[dd]_(.3){\theta_W} & & \\
X' \ar@{-->}[rrrr]^(.6){\pi'} \ar[dd]_{\theta_X} & & & & Z' \ar[dd]^{\theta_Z} \\
& & W \ar[lld]_{p} \ar[rrd]^{q} & & \\
X \ar@{-->}[rrrr]^{\pi} & & & & Z \\
}
\]
Moreover, if $\theta_Z$ is \'etale in codimension one, then so are $\theta_X$ and $\theta_W$.
\end{lemma}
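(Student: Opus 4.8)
The plan is to build $W'$ as a normalized base change of the rationally connected fibration $q\colon W\lra Z$ produced in \cref{lemma:sMRC} along $\theta_Z$, to descend it to $\theta_X\colon X'\lra X$ by relative normalization, and then to read off all the stated properties; $G$-equivariance will be automatic from the functoriality of these constructions, while the MRC statement will follow from a standard comparison argument once one knows that $Z'$ is non-uniruled. The genuinely delicate point will be the ``moreover'' part, i.e. controlling the full branch locus of $\theta_X$.

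Concretely, I would first take $W'$ to be the normalization of the fibre product $W\times_Z Z'$, with $\theta_W\colon W'\lra W$ the finite morphism it induces and $q'\colon W'\lra Z'$ the composition of the normalization map with the second projection. Since a general fibre of $q$ is rationally connected, hence geometrically irreducible, $W\times_Z Z'$ is irreducible, so $W'$ is a normal projective variety; since $\theta_Z$ is finite and étale over a general point of $Z$, the base change $W\times_Z Z'\lra W$ is étale (in particular $W\times_Z Z'$ is normal) near a general fibre of $q$, so the normalization is an isomorphism there, and a general fibre of $q'$ is a general fibre of $q$. This gives (2), and also shows that $q'$ is equidimensional and surjective. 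Next I would let $\theta_X\colon X'\lra X$ be the normalization of $X$ in the function field $\bC(W')$; the universal property yields a dominant morphism $p'\colon W'\lra X'$ with $p\circ\theta_W=\theta_X\circ p'$, and $p'$ is birational because $\bC(X')=\bC(W')$. Thus $X'$ is a normal projective variety finite over $X$, and we set $\pi'\coloneqq q'\circ(p')^{-1}$, so (1) holds. For (4): the diagonal $G$-action on $W\times_Z Z'$ makes sense because $q$ is $G$-equivariant (by \cref{lemma:sMRC}) and $\theta_Z$ is $G$-equivariant (by hypothesis); it lifts to $W'$ by functoriality of normalization, so $\theta_W$ and $q'$ are $G$-equivariant, and then $p\circ\theta_W$ is $G$-equivariant, whence functoriality of relative normalization makes $\theta_X$ and $p'$ $G$-equivariant.

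For (3) I would first observe that $Z'$ is non-uniruled: on smooth models $f\colon\widetilde{Z'}\lra\widetilde{Z}$ one has $K_{\widetilde{Z'}}=f^*K_{\widetilde{Z}}+E$ with $E\geq 0$, so $K_{\widetilde{Z'}}$ is pseudo-effective since $Z$ is non-uniruled, whence $Z'$ is non-uniruled by \cite{BDPP13} (equivalently, a finite surjection carries a covering family of rational curves to one). Since $\pi'$ then has rationally connected general fibres over a non-uniruled base, it is birational to the MRC fibration $\mu\colon X'\ratmap Z_0$ of $X'$: the map $\mu$ contracts the rationally connected general fibres of $\pi'$, hence factors through $\pi'$ via a rational map $Z'\ratmap Z_0$ whose general fibres are dominant images of rationally connected varieties, hence rationally connected, hence points because $Z'$ is non-uniruled, so that map is birational. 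This identification is canonical, hence $G$-equivariant, which gives (3); taking $\theta_Z$ to be a Q-abelian cover $A\lra A/F=Z$ when $\dim Z\geq 3$ then yields the final assertion of \cref{thmA}(3).

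It remains to handle the ``moreover'' part, which I expect to be the real obstacle, since one must bound the whole branch locus of $\theta_X$ rather than merely prove $\theta_X$ finite. If $\theta_Z$ is étale outside $R_Z\subseteq Z$ with $\codim_Z R_Z\geq 2$, then equidimensionality of $q$ forces $\codim_W q^{-1}(R_Z)\geq 2$, and $W\times_Z Z'\lra W$ is étale over the complement, so $\theta_W$ is an isomorphism there and hence étale in codimension one. For $\theta_X$ I would use the fact that a proper birational morphism onto a normal variety is an isomorphism in codimension one: thus $p$ is an isomorphism over an open $U\subseteq X$ with $\codim_X(X\setminus U)\geq 2$; over $U$ the morphism $p'$ restricts to a finite (quasi-finite and proper) birational morphism onto a normal variety, hence an isomorphism by Zariski's main theorem, and therefore $\theta_X$ over $U$ is identified with $\theta_W$ over $p^{-1}(U)$, which is étale in codimension one; hence so is $\theta_X$. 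The remaining verifications are routine bookkeeping with fibre products, normalizations, and the universal properties already packaged in \cref{lemma:sMRC}.
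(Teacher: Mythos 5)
Your construction is essentially the paper's own proof: you take $W'$ to be the normalization of $W\times_Z Z'$, descend to $X'$ by (what amounts to) the Stein factorization of $p\circ\theta_W$, get the $G$-actions by functoriality, and compare $\pi'$ with the MRC fibration of $X'$ using that $Z'$ is non-uniruled. The only notable difference is that your argument for the ``moreover'' part (reducing $\theta_X$ to $\theta_W$ over the big open set of $X$ where $p$ is an isomorphism, via Zariski's main theorem) is spelled out in more detail than the paper's one-line remark, and it is correct.
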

\begin{proof}
Let $W'$ be the normalization of the fiber product $W \times_Z Z'$.
Denote by $\theta_W \colon W' \lra W$ and $q' \colon W' \lra Z'$ the morphisms induced from the first and second projections, respectively.
Then $q'$ is an equidimensional surjective morphism whose general fibers are rationally connected varieties and in particular irreducible, since so is $q$.
Here we use the fact that smooth rationally connected varieties are simply connected.
This forces $W'$ to be irreducible and hence $W'$ is a normal projective variety.
Clearly, the $G$-actions on $W$ and $Z'$ can be naturally extended to $W \times_Z Z'$ and hence to $W'$, which is faithful since $G$ acts faithfully on $W$.
Note that $Z'$ is non-uniruled since so is $Z$.
It follows that $q'$ is $G$-equivariantly birational to the special MRC fibration of $W'$ by \cref{def:sMRC}.
Taking the Stein factorization of the composite $p \circ \theta_W \colon W' \lra W \lra X$,
we then have a birational morphism $p' \colon W' \lra X'$ and a finite morphism $\theta_X \colon X' \lra X$ for a normal projective variety $X'$ such that $p \circ \theta_W = \theta_X \circ p'$;
furthermore, the faithful $G$-actions on $W'$ and $X$ also induce a faithful $G$-action on $X'$.
%(see e.g., \cite[\href{https://stacks.math.columbia.edu/tag/03H0}{Theorem 03H0} and \href{https://stacks.math.columbia.edu/tag/035J}{Lemma 035J}]{stacks-project}).
Since the notion of the MRC fibration is essentially birational in nature, $\pi' = q' \circ p'^{-1}$ is also $G$-equivariantly birational to the MRC fibration of $X'$.
So all conditions (1)--(4) have been satisfied.

The last part follows from the fact that being \'etale is a local property stable under base change.
\end{proof}

%\begin{question}
%Is the above $\pi'$ isomorphic to the special MRC fibration of $X'$?
%\end{question}

\subsection{Dynamical ranks}
\label{subsec:dyn-rank}

In this section, we shall consider the dynamical rank of group actions in a much more general setting.
We first recall the following Tits alternative type theorem due to De-Qi Zhang \cite{Zhang09}.

\begin{theorem}[{cf.~\cite{Zhang09}}]
\label{thm:Zhang-A}
Let $X$ be a normal projective variety of dimension $n \ge 2$ and $G$ a subgroup of $\Aut(X)$.
Then one of the following two assertions holds.
\begin{itemize}
\item[(1)] $G$ contains a subgroup isomorphic to the non-abelian free group $\bZ*\bZ$.
\item[(2)] There is a finite-index subgroup $G_1$ of $G$ such that the induced group $G_1|_{\NS(X)_\bR}$ is solvable and Z-connected. Moreover, the null-entropy subset $N(G_1)$ of $G_1$ is a normal subgroup of $G_1$ and the quotient group $G_1/N(G_1)$ is free abelian of rank $r \le n-1$.
\end{itemize}
\end{theorem}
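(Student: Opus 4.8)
The plan is to transport everything to the induced linear action on the N\'eron--Severi space and then feed in, in this order, the Tits alternative for linear groups, the Lie--Kolchin theorem, an elementary finiteness statement (Northcott), and the Dinh--Sibony bound for \emph{abelian} groups recalled in \cref{sec:intro}. Concretely, let $G^\ast \coloneqq G|_{\NS(X)_\bR} \le \GL(\NS(X)_\bR)$ be the image of the natural (anti-)homomorphism $g \mapsto g^\ast$. Since $d_1(g)$ is the spectral radius of $g^\ast$, membership in any null-entropy subset depends only on $g^\ast$, so $N(G)$ is the preimage of $N(G^\ast)$; moreover $g^\ast$ preserves the lattice $\NS(X)/(\text{torsion})$, so its characteristic polynomial is monic in $\bZ[t]$ of degree $N \coloneqq \dim \NS(X)_\bR$. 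I will repeatedly use that $d_1(g)=1$ is equivalent to all eigenvalues of $g^\ast$ lying on the unit circle, hence also equivalent to $d_1(g^{-1})=1$.

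If $G^\ast$ contains a copy of $\bZ*\bZ$, pick $a,b\in G$ mapping to free generators; then $\langle a,b\rangle \le G$ maps onto $\bZ*\bZ$, and as free groups are projective this surjection splits, so $G$ itself contains $\bZ*\bZ$, giving conclusion~(1). Otherwise the Tits alternative provides a solvable finite-index subgroup $\Gamma \le G^\ast$. Its Zariski closure $\overline{\Gamma}$ in $\GL(\NS(X)_\bR)$ is a solvable algebraic group, hence has finitely many connected components; setting $\Gamma_1 \coloneqq \Gamma \cap \overline{\Gamma}^{\,\circ}$ one checks, comparing dimensions of Zariski closures, that $\overline{\Gamma_1}=\overline{\Gamma}^{\,\circ}$, so $\Gamma_1$ is solvable, Z-connected and still of finite index in $G^\ast$. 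Put $H\coloneqq\Gamma_1$ and let $G_1\le G$ be its full preimage, a finite-index subgroup with $G_1|_{\NS(X)_\bR}=H$.

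Since the Zariski closure of the Z-connected group $H$ is connected and contains the identity, it is geometrically connected; complexifying and applying Lie--Kolchin yields a complete $H$-invariant flag $0=V_0\subsetneq V_1\subsetneq\dots\subsetneq V_N=\NS(X)_\bC$. Let $\chi_i\colon H\to\bC^\ast$ record the action on $V_i/V_{i-1}$ (a homomorphism, $\bC^\ast$ being abelian); then $\{\chi_1(g),\dots,\chi_N(g)\}$ is the spectrum of $g^\ast$, so $d_1(g)=\max_i|\chi_i(g)|$. Hence $N(H)=\bigcap_i\ker(|\chi_i|)$ is a normal subgroup of $H$, and $g\mapsto(\log|\chi_i(g)|)_i$ embeds $\Lambda\coloneqq H/N(H)$ into $\bR^N$, so $\Lambda$ is torsion-free abelian. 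Its image is moreover \emph{discrete}: for fixed $X$ the characteristic polynomial of $g^\ast$ is monic of degree $N$ over $\bZ$, and whenever $d_1(g)>1$ it has a root of modulus $>1$, so by Northcott's theorem $d_1(g)$ is bounded below by a constant $>1$ depending only on $N$. Therefore $\Lambda\isom\bZ^r$ is free abelian; pulling back, $N(G_1)$ is normal in $G_1$ and $G_1/N(G_1)\isom\Lambda\isom\bZ^r$.

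It remains to bound $r\le n-1$, and this, together with the reduction of $\Lambda$ to a genuine abelian group of positive-entropy automorphisms, is where I expect the real difficulty to lie, since here the geometry of $X$ enters rather than pure group theory. Because $\overline{H}$ is connected solvable, its derived subgroup is unipotent, so $[H,H]$ (indeed the intersection of $H$ with the unipotent radical of the ambient Borel) consists of elements with all eigenvalues $1$ and hence lies in $N(H)$; thus $\Lambda$ is already a quotient of an \emph{abelian} group. Exploiting this through a sequence of finite-index reductions one extracts an abelian subgroup $A\le G_1$ whose image in $\Lambda$ has finite index and whose non-identity elements are all of positive entropy, whence $r=\rank\Lambda=\rank A\le n-1$ by the Dinh--Sibony theorem quoted in \cref{sec:intro} (cf.~\cite{DS04}), whose proof is exactly the geometric input: a Perron--Frobenius eigenvector in $\Nef(X)$ together with the log-concavity of mixed intersection numbers, the sole place the dimension $n$ is used. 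Everything else above is soft, resting only on the Tits alternative, Lie--Kolchin, and Northcott's finiteness theorem.
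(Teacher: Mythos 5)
Your soft steps are essentially correct and run parallel to the standard argument: transporting to $G|_{\NS(X)_\bR}$, the splitting trick for conclusion (1) (a surjection onto a free group splits), passing to a solvable, Z-connected finite-index subgroup via Tits and the identity component of the Zariski closure, the Lie--Kolchin flag with its characters $\chi_i$, the identification $N(G_1)=\bigcap_i\ker|\chi_i|$ (normality), and the Northcott-type gap $d_1(g)\ge c_N>1$ giving discreteness, hence $G_1/N(G_1)\isom\bZ^r$. The genuine gap is the last step, the bound $r\le n-1$. You propose to ``extract an abelian subgroup $A\le G_1$ whose image in $\Lambda$ has finite index and whose non-identity elements are of positive entropy'' and then quote the abelian theorem of \cite{DS04}; but no argument for the existence of such an $A$ is given, and nothing you have established supplies one. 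All you know is that $[G_1,G_1]$ maps into $N(G_1)$, i.e.\ $\Lambda$ is abelian, which is contentless for producing an abelian \emph{subgroup}: an extension $1\to N(G_1)\to G_1\to\bZ^r\to 1$ of this shape need not contain any abelian subgroup meeting $N(G_1)$ trivially and hitting a finite-index subgroup of $\bZ^r$. If two lifts $g_1,g_2$ of independent generators have commutator an infinite-order (say central) element of $N(G_1)$ --- a Heisenberg-type configuration --- then $[g_1^a,g_2^b]\ne\id$ for all $a,b\ne 0$, so no ``sequence of finite-index reductions'' can make them commute; and $N(G_1)$ can indeed be infinite (e.g.\ it may contain translations of an abelian variety), so this behaviour is not excluded by anything in your argument. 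This is precisely why the theorem in \cite{Zhang09}, and its restatement here, is phrased in terms of the quotient $G_1/N(G_1)$ rather than in terms of a positive-entropy abelian subgroup of rank $r$; whether such a subgroup (equivalently, a virtual splitting of the extension) exists is a delicate question, not a formality.

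Consequently the rank bound cannot be obtained by citing \cite{DS04} as a black box; this is exactly the point where the geometry must be fed into the \emph{whole} solvable group. The cited proof does this by applying the theorem of Lie--Kolchin type for the nef cone \cite{KOZ09} to get common nef eigenvectors for all of $G_1$ (not merely a triangularizing flag in $\NS(X)_\bC$), building the associated characters via quasi-nef sequences, and bounding the number of multiplicatively independent characters modulo unit-modulus ones by $n-1$ using intersection numbers and Hodge-index/log-concavity inequalities --- in effect re-running the Dinh--Sibony argument for a solvable, Z-connected group instead of reducing to an abelian subgroup. Unless you can actually prove the existence of your subgroup $A$ (which I do not believe is available, and which would be a result of independent interest), your final step must be replaced by an argument of this kind; everything before it can stay.
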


\begin{rmk}
\label{rmk:dyn-rank}
In general, the induced group $G|_{\NS(X)_\bR}$ of $G$ is called {\it Z-connected} if its Zariski closure in $\GL(\NS(X)_\bC)$ is connected with respect to the Zariski topology.
Note that being Z-connected is only a technical condition for us to apply the theorem of Lie--Kolchin type for a cone in \cite{KOZ09}.
Actually, it is always satisfied by replacing the group with a finite-index subgroup (see e.g., \cite[Remark~3.10]{DHZ15}).
We will frequently use this fact without mentioning it very precisely.
\end{rmk}

We also remark that in the second assertion of the above \cref{thm:Zhang-A}, the rank of $G_1/N(G_1)$ is independent of the choice of $G_1$.
Hence, it makes sense to think of this as an invariant of $G$.
We introduce the following notion of dynamical rank in a much broader sense.

\begin{definition}[Dynamical rank]
\label{def:dyn-rank}
Let $X$ be a normal projective variety of dimension $n$ and $G$ a subgroup of $\Aut(X)$ such that $G|_{\NS(X)_\bR}$ is solvable.
Then the rank of the free abelian group $G/N(G)$ is called the {\it dynamical rank of $G$}, and denoted by $\dr(G)$.
\end{definition}

As one may have noticed, we suppress the condition ``$G|_{\NS(X)_\bR}$ is Z-connected".
This does not affect the well-definedness of our dynamical rank according to \cref{rmk:dyn-rank}.

Sometimes, we may write $\dr(G|_X)$ to emphasize that it is the dynamical rank of the group $G$ acting on $X$.
Conventionally, the dynamical rank of a group of null entropy is always zero.
We first quote the following result which generalizes \cite[Lemma~2.10]{Zhang09}.

\begin{lemma}[{cf.~\cite[Lemmas 4.1 and 4.3]{Hu19}}]
\label{Zha-Hu1}
Let $\pi \colon X \ratmap Y$ be a $G$-equivariant dominant rational map of normal projective varieties with $n = \dim X > \dim Y = m > 0$.
Suppose that $G|_{\NS(X)_\bR}$ is solvable.
Then so is $G|_{\NS(Y)_\bR}$, and we have
\[
\dr(G|_X)\le \dr(G|_Y) + n - m - 1.
\]
In particular, $\dr(G|_X) = n - 2$ only if $\dr(G|_Y) = m - 1$.
\end{lemma}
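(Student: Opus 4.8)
The plan is to reduce to the case where $\pi$ is a morphism, to invoke the product formula for dynamical degrees along a fibration of Dinh--Nguy\^en \cite{DN11}, and then to bound the remaining fibrewise dynamics by a relative analogue of the Dinh--Sibony rank estimate. As a first step I would replace $\pi$ by the two projections from the normalization $W$ of its graph $\Gamma_\pi\subseteq X\times Y$; this produces a normal projective variety $W$ with a faithful $G$-action (the diagonal one, since $G$ descends to $Y$ by $G$-equivariance of $\pi$), a $G$-equivariant birational morphism $p\colon W\lra X$, and a $G$-equivariant surjective morphism $q\colon W\lra Y$ whose general fibre has dimension $e\coloneqq n-m\ge 1$. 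Recall that the first dynamical degree is a birational invariant for automorphisms, so $d_1(g_W)=d_1(g_X)$ for every $g\in G$; combined with a standard argument using that $g_W$ permutes the finitely many $p$-exceptional prime divisors, one gets that $G|_{\NS(W)_\bR}$ is virtually solvable. After replacing $G$ by a finite-index subgroup (which changes none of the quantities in the statement) we may assume $G|_{\NS(W)_\bR}$ is solvable; since $q^*$ realizes $\NS(Y)_\bR$ as a $G$-submodule of $\NS(W)_\bR$, its subquotient $G|_{\NS(Y)_\bR}$ is solvable as well, proving the first assertion. Moreover $N(G|_W)=N(G|_X)$ and $\dr(G|_W)=\dr(G|_X)$, so the inequality reduces to proving $\dr(G|_W)\le\dr(G|_Y)+e-1$.

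For the reduced inequality, fix $g\in G$ and apply the product formula of \cite{DN11} to the $G$-equivariant surjection $q$: it gives $d_1(g_W)=\max\{d_1(g_Y),\,d_1(g_W\mid q)\}$, where $d_1(g_W\mid q)$ denotes the first relative dynamical degree. In particular $d_1(g_W)=1$ forces $d_1(g_Y)=1$, so $N(G|_W)$ is contained in $H\coloneqq\{g\in G:d_1(g_Y)=1\}$, the preimage in $G$ of $N(G|_Y)$; and for $g\in H$ one has $d_1(g_W)=d_1(g_W\mid q)$, whence $N(G|_W)=\{g\in H:d_1(g_W\mid q)=1\}$. After a further finite-index replacement (so that $G/N(G|_W)$, $G/H$ and $H/N(G|_W)$ are free abelian by \cref{thm:Zhang-A}), and using the isomorphism $G/H\isom(G|_Y)/N(G|_Y)$, whose target has rank $\dr(G|_Y)$, additivity of rank in the short exact sequence $1\to H/N(G|_W)\to G/N(G|_W)\to G/H\to 1$ yields
\[ \dr(G|_W)=\dr(G|_Y)+\rank\bigl(H/N(G|_W)\bigr). \]
It thus remains to show $\rank\bigl(H/N(G|_W)\bigr)\le e-1$.

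This last bound is the step I expect to be the main obstacle, as it is the only one that does not reduce to soft linear algebra on N\'eron--Severi spaces. Here I would invoke a \emph{relative} version of the Dinh--Sibony theorem: for a subgroup $\Gamma\le\Aut(W)$ that is semi-conjugate over $q$ to a subgroup of $\Aut(Y)$ and has $\Gamma|_{\NS(W)_\bR}$ solvable, the subset $N_q(\Gamma)\coloneqq\{h\in\Gamma:d_1(h\mid q)=1\}$ is a normal subgroup and $\Gamma/N_q(\Gamma)$ is free abelian of rank at most $e-1=\dim W-\dim Y-1$. Applying this to the image of $H$ in $\Aut(W)$ and noting, by the previous paragraph, that $N_q$ of this image pulls back to $N(G|_W)$, we obtain $\rank\bigl(H/N(G|_W)\bigr)\le e-1$, hence $\dr(G|_X)=\dr(G|_W)\le\dr(G|_Y)+e-1$. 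Proving this relative rank estimate amounts to reproducing Dinh--Sibony's argument in the fibred setting, using the log-concavity and relative Hodge--Riemann-type inequalities that govern the relative dynamical degrees of \cite{DN11}. Finally, the ``in particular'' clause is immediate: since $\dr(G|_Y)\le m-1$ always (\cref{thm:Zhang-A}), the inequality reads $\dr(G|_X)\le(m-1)+(n-m-1)=n-2$, so the equality $\dr(G|_X)=n-2$ forces $\dr(G|_Y)=m-1$.
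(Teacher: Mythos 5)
Your reduction is fine as far as it goes: passing to the normalization $W$ of the graph, using birational invariance of $d_1$ and \cref{Zha-Hu2}-type arguments to transfer solvability, and invoking the product formula of \cite{DN11} to split $N(G|_W)$ against $H$ (the preimage of $N(G|_Y)$) are all correct, if somewhat informally handled (note also that \cite{DN11} is stated for compact K\"ahler manifolds, so for the normal varieties $W$ and $Y$ you should first pass to $G$-equivariant resolutions and use birational invariance of the relative degrees). But the step you yourself flag as the ``main obstacle'' --- the bound $\rank\bigl(H/N(G|_W)\bigr)\le n-m-1$ via a ``relative Dinh--Sibony theorem'' --- is precisely the mathematical content of the lemma, and it is not proved: given the product formula, everything else in your argument is bookkeeping with short exact sequences. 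The relative statement you invoke is not available in the literature in the form you use it; even its preliminary assertions (that $N_q(\Gamma)$ is a normal subgroup and that $\Gamma/N_q(\Gamma)$ is free abelian) already require an argument of the same depth as \cite{Zhang09} and \cref{thm:Zhang-A}. Note in particular that $H|_Y$ is only of null entropy, not finite or trivial (think of translations on an abelian base), so you cannot reduce to the absolute theorem of \cite{DS04} on a general fibre; a genuinely fibred Hodge--Riemann/log-concavity argument would have to be carried out, and none is given. So as it stands the proposal assumes the hard part.

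For comparison, the paper does not reprove this lemma but quotes it from \cite[Lemmas~4.1 and 4.3]{Hu19}, and the argument there (following \cite[Lemma~2.10]{Zhang09}) avoids relative dynamical degrees altogether: using the theorem of Lie--Kolchin type for a cone \cite{KOZ09}, one produces common nef eigenvectors and the associated characters of $G$, taking $m$ of them pulled back from $Y$ (so that they vanish on the preimage of $N(G|_Y)$, contributing at most $\dr(G|_Y)$ to the rank) and completing to a full sequence on $X$ by $n-m$ further classes; the invariance of a nonzero top intersection number imposes one multiplicative relation among the characters, which is where the $-1$ comes from, and the null-entropy subgroup is identified with the common kernel of the characters. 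That route yields $\dr(G|_X)\le\dr(G|_Y)+n-m-1$ directly, works for normal varieties (and even in positive characteristic), and is the argument you would need to reconstruct --- or else you must actually prove your relative rank estimate, which is a nontrivial project in itself.
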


The lemma below asserts that our dynamical rank is actually a birational invariant.
See also \cite[Lemma~3.1]{Zhang16} for a similar treatment.

\begin{lemma}[{cf.~\cite[Lemmas 4.2 and 4.4]{Hu19}}]
\label{Zha-Hu2}
Let $\pi \colon X\dashrightarrow Y$ be a $G$-equivariant generically finite dominant rational map of normal projective varieties.
Then after replacing $G$ by a finite-index subgroup, $G|_{\NS(X)_\bR}$ is solvable if and only if so is $G|_{\NS(Y)_\bR}$.
Moreover, $\dr(G|_X) = \dr(G|_Y)$.
\end{lemma}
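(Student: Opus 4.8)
The plan is to factor $\pi$ through the normalization $W$ of its graph $\Gamma_\pi\subseteq X\times Y$, which inherits a faithful $G$-action, into a $G$-equivariant birational morphism $p\colon W\lra X$ followed by a $G$-equivariant generically finite surjective morphism $\psi\colon W\lra Y$. Since solvability of the N\'eron--Severi actions and the invariant $\dr$ are compatible with composition, it then suffices to prove the lemma separately when $\pi$ is (a) a birational morphism, or (b) a generically finite surjective morphism; throughout I would freely pass to finite-index subgroups of $G$. Before that I would record one soft fact needed in case (b): $d_1(g_X)=d_1(g_Y)$ for every $g\in G$. Here ``$\ge$'' holds because $\pi^*$ is an injective $G$-equivariant map $\NS(Y)_\bR\hookrightarrow\NS(X)_\bR$ onto a $g_X^*$-stable subspace, and ``$\le$'' because $\pi_*$ of an ample class on $X$ dominates an ample class on $Y$, hence is big and detects $d_1(g_Y)$, while $\|\pi_*(g_X^n)^*H\|$ is bounded by a constant times $\|(g_X^n)^*H\|$. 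In particular $N(G|_X)=N(G|_Y)$, so once both N\'eron--Severi actions are known to be solvable the equality $\dr(G|_X)=\dr(G|_Y)$ is automatic; the real content is the equivalence of solvability.

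Case (a) is quick. For a birational morphism $p$ we have $p_*p^*=\id$ on $\NS_\bR$, so $\NS(W)_\bR=p^*\NS(X)_\bR\oplus\ker p_*$ is a direct sum of $G$-modules and $\ker p_*$ is spanned by the finitely many $p$-exceptional prime divisors. After replacing $G$ by a finite-index subgroup it fixes each such divisor, hence acts trivially on $\ker p_*$; thus $G|_{\NS(W)_\bR}\cong G|_{\NS(X)_\bR}$ as abstract groups, and solvability transfers both ways.

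For case (b) one implication is immediate: by $\pi_*\pi^*=(\deg\pi)\cdot\id$ the map $\pi^*$ is injective, so it realizes $\NS(Y)_\bR$ as a $G$-submodule of $\NS(X)_\bR$, whence $G|_{\NS(Y)_\bR}$ is a quotient of $G|_{\NS(X)_\bR}$ and inherits solvability. For the converse I would argue by contradiction. Suppose $G|_{\NS(Y)_\bR}$ is solvable but $G|_{\NS(X)_\bR}$ is not virtually solvable. By the Tits alternative for the linear group $G|_{\NS(X)_\bR}$ (as in \cref{thm:Zhang-A}), together with the fact that a non-abelian free subgroup of a quotient of $G$ lifts to $G$, there is a subgroup $H\le G$ with $H\cong\bZ*\bZ$ acting faithfully on $\NS(X)_\bR$. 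Let $H_0$ be the kernel of the restriction homomorphism $H\to\GL(\pi^*\NS(Y)_\bR)$; it is a normal subgroup of $H$. If $h\in H_0$ then $h_X^*$ is the identity on $\pi^*\NS(Y)_\bR$, hence (as $\pi^*$ is injective) $h_Y^*$ is the identity on $\NS(Y)_\bR$, so $d_1(h_Y)=1$ and therefore $d_1(h_X)=1$ by the soft fact above; since $\det(h_X^*)=\pm1$, every eigenvalue of $h_X^*\in\GL(\NS(X)_\bZ)$ then has modulus exactly $1$, so $h_X^*$ is quasi-unipotent by Kronecker's theorem. Thus $H_0\cong H_0|_{\NS(X)_\bR}$ is a group of quasi-unipotent integer matrices, and such a group contains no non-abelian free subgroup (a non-virtually-solvable subgroup of $\GL_m(\bZ)$ must contain an element of spectral radius $>1$); hence $H_0$ is virtually solvable. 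But a nontrivial normal subgroup of $\bZ*\bZ$ is free of rank $\ge2$, hence not virtually solvable, so $H_0=\{\id\}$. Then $H$ injects into $H|_{\NS(Y)_\bR}$, exhibiting a copy of $\bZ*\bZ$ inside $G|_{\NS(Y)_\bR}$ and contradicting its solvability. Hence $G|_{\NS(X)_\bR}$ is virtually solvable; combined with $N(G|_X)=N(G|_Y)$, this also yields $\dr(G|_X)=\dr(G|_Y)$.

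The step I expect to be the main obstacle is exactly this converse in case (b): certifying that the part of $\NS(X)_\bR$ not seen on $Y$ cannot carry a non-virtually-solvable $G$-action. The argument rests on two inputs I would take care over --- the invariance of the first dynamical degree under generically finite equivariant morphisms, and the integrality of the N\'eron--Severi lattice, via which null entropy forces quasi-unipotence (Kronecker) and hence the absence of free subgroups. The birational case and the descent of $\dr$ under composition are, by comparison, routine bookkeeping.
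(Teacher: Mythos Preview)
The paper does not prove this lemma; it is quoted from \cite[Lemmas~4.2 and 4.4]{Hu19}. Your graph-factorization strategy and your treatment of the birational case are correct and standard. In case~(b), however, two steps are not justified as written.

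Your ``soft fact'' $d_1(g_X)=d_1(g_Y)$ is only half established: both your $\pi^*$ argument and your $\pi_*$ argument prove the \emph{same} inequality $d_1(g_Y)\le d_1(g_X)$. Indeed, from $\|\pi_*(g_X^n)^*H\|\le C\|(g_X^n)^*H\|$ together with $\pi_*(g_X^n)^*H=(g_Y^n)^*\pi_*H$ one obtains $d_1(g_Y)\le d_1(g_X)$, which is exactly what the $\pi^*$ argument already gave. But later you need the \emph{opposite} inequality $d_1(g_X)\le d_1(g_Y)$, precisely when you pass from $d_1(h_Y)=1$ to $d_1(h_X)=1$ for $h\in H_0$. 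For that direction, use instead that $\pi^*A_Y$ is nef and big on $X$ for $A_Y$ ample on $Y$: write $H_X\equiv N\pi^*A_Y-F$ with $F$ pseudo-effective, bound $(g_X^n)^*H_X\cdot H_X^{\dim X-1}$ from above, and apply the projection formula to land on $Y$. Alternatively, simply invoke \cite{DN11}.

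Separately, the assertion that a subgroup of $\GL_m(\bZ)$ consisting entirely of quasi-unipotent elements contains no non-abelian free subgroup is true but is not a consequence of Kronecker's theorem alone; your parenthetical is a restatement of the claim, not a proof. What Kronecker gives is a uniform $N$ with $(h^N-I)^m=0$ for all $h\in H_0$. Since this is a Zariski-closed condition, the Zariski closure $\ol{H_0}$ inherits it, so every maximal torus of the identity component $\ol{H_0}^\circ$ has finite exponent and must be trivial; hence $\ol{H_0}^\circ$ is unipotent and $H_0$ is virtually nilpotent. Only then does the contradiction with $H_0$ being free of rank $\ge 2$ follow. You could also shortcut this step by citing that a null-entropy subgroup of $\Aut(X)$ acts virtually unipotently on $\NS(X)_\bR$; see e.g.\ \cite{DHZ15}.
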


%%%%%%%%%%%%%%%%%%%%%%%%%%%%%%%%%%%%%%%%%%%%%%%%%%%%%%%%%%

\section{Proof of Theorem \ref{thmA}}
\label{sec-proof}

%%%%%%%%%%%%%%%%%%%%%%%%%%%%%%%%%%%%%%%%%%%%%%%%%%%%%%%%%%

\noindent
The theorem will follow immediately from the following lemmas.
Each one will correspond to one assertion of \cref{thmA}.

\begin{lemma}
\label{lemma:pos-kappa}
Let $(X,G)$ satisfy \hyperref[hyp]{\rm Hyp$(n,n-2)$} with $n \ge 3$.
Suppose that the Kodaira dimension $\kappa(X)$ of $X$ is positive.
Then $\kappa(X) = 1$ and there exists a dominant rational fibration $\phi \colon X \ratmap B$ for some curve $B$ such that after replacing $G$ by a finite-index subgroup, the following assertions hold.
\begin{itemize}
\item[(1)] $G$ descends to a trivial action on the base curve $B$ of $\phi$.
\item[(2)] Let $F$ be a very general fiber $F$ of $\phi$. Then the induced $G$-action on $F$ is faithful such that the pair $(F,G|_F)$ satisfies \hyperref[hyp]{\rm Hyp$(n-1,n-2)$}. Moreover, $F$ is $G$-equivariantly birational to a K3 surface, an Enriques surface, or a Q-abelian variety (see \cref{def:Q-abel}).
\end{itemize}
\end{lemma}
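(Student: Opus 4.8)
The plan is to push everything through the Iitaka fibration. After replacing $X$ by a $G$-equivariant resolution, and then by a further $G$-equivariant model on which the Iitaka fibration $\phi\colon X\to B$ is a genuine morphism, neither $\kappa(X)$ nor $\dr(G|_X)=\rank G=n-2$ changes by \cref{Zha-Hu2} (and $G$ stays of positive entropy, as $\dr(G|_X)=\rank G$ forces $N(G)=\{\id\}$ for the torsion-free $G$). Since $\bigoplus_m H^0(X,mK_X)$ is a graded ring on which $\Aut(X)$ acts by graded ring automorphisms, $G$ descends to a biregular action on the canonical model $B_{\mathrm{can}}=\Proj\bigoplus_m H^0(X,mK_X)$ preserving the very ample line bundle $\cO_{B_{\mathrm{can}}}(N)$ for $N\gg 0$; hence $G|_{B_{\mathrm{can}}}$ lies in $\Aut(B_{\mathrm{can}},[\cO_{B_{\mathrm{can}}}(N)])\cong\{\gamma\in\PGL(H^0(B_{\mathrm{can}},\cO_{B_{\mathrm{can}}}(N))):\gamma(B_{\mathrm{can}})=B_{\mathrm{can}}\}$, a linear algebraic group, which therefore acts through a finite quotient on the discrete group $\NS(B_{\mathrm{can}})$. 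Thus $d_1(g|_{B_{\mathrm{can}}})=1$ for every $g\in G$, i.e.\ $\dr(G|_B)=0$ for any normal birational model $B$ of $B_{\mathrm{can}}$ (again by \cref{Zha-Hu2}). We may also assume $\kappa(X)<n$: otherwise $X$ is of general type, $\Aut(X)$ is finite, and $G\cong\bZ^{n-2}\neq\{\id\}$ is absurd for $n\ge 3$.

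The equality $\kappa(X)=1$ then drops out. We apply \cref{Zha-Hu1} to the $G$-equivariant fibration $\phi\colon X\to B$, valid since $0<\dim B=\kappa(X)<n$ and $G|_{\NS(X)_\bR}$ is abelian hence solvable: as $\dr(G|_X)=n-2$, the ``in particular'' clause forces $\dr(G|_B)=\dim B-1$, and comparing with $\dr(G|_B)=0$ gives $\dim B=1=\kappa(X)$. From now on $B$ is a smooth projective curve.

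For (1) I need the stronger statement that $G|_B$ is \emph{finite}; then, after replacing $G$ by the finite-index subgroup $\ker(G\to G|_B)$ (still $\cong\bZ^{n-2}$ and of positive entropy), $G$ acts trivially on $B$. When the genus $g(B)\ge 2$ this is classical. When $g(B)=1$, the linear algebraic group $\Aut(B,[\cO_B(N)])$ has trivial identity component, because a translation preserves a line-bundle class of positive degree only if it is torsion; hence $G|_B$ is finite. The remaining case $B\cong\bP^1$ is the genuinely delicate point, since there ``$G|_B$ preserves a polarization'' is vacuous: here I would instead invoke the canonical bundle formula for $\phi$ (Kawamata, Fujino--Mori), whose discriminant divisor $\Delta$ and moduli part are intrinsic to $\phi$ and hence $G|_B$-stable. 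Either the induced moduli map $B\dashrightarrow\overline{M}$ is non-constant, so that $G|_B$ permutes its finite general fibres, every $G|_B$-orbit is finite, and $G|_B$ is finite; or $\phi$ is isotrivial, in which case $\kappa(X)=\kappa(\bP^1,K_{\bP^1}+\Delta)=1$ forces $\Supp\Delta$ to contain at least three points, whence $\Aut(\bP^1,\Supp\Delta)$, and a fortiori $G|_B$, is finite. Either way $G|_B$ is finite, giving (1).

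Finally (2). With $G$ acting trivially on $B$, every fibre of $\phi$ is $G$-stable, a very general fibre $F$ is smooth of dimension $n-1$ with $\kappa(F)=0$, and since a non-identity element of $G$ has proper fixed locus in $X$ it acts non-trivially on a very general $F$; thus $G|_F\cong\bZ^{n-2}$. The Dinh--Nguy\^en product formula \cite{DN11} for $\phi$ (base a curve, $g|_B=\id$) yields $d_1(g|_X)=\max\{d_1(g|_B),\,d_1(g|_F)\}=d_1(g|_F)$ for every $g\in G$, so $G|_F$ is of positive entropy and $(F,G|_F)$ satisfies \hyperref[hyp]{\rm Hyp$(n-1,n-2)$} --- the \emph{maximal} dynamical rank case for the $(n-1)$-fold $F$. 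If $n\ge 4$, then $\dim F\ge 3$ and $F$ is not rationally connected (as $\kappa(F)=0$), so \cref{thm:Zhang} applies: after one more finite-index replacement, $F$ is $G$-equivariantly birational to $A/F_0$ with $K_{A/F_0}\sim_\bQ 0$, i.e.\ to a Q-abelian variety. If $n=3$, then $F$ is a smooth projective surface with $\kappa(F)=0$ carrying a positive-entropy automorphism; by the Enriques--Kodaira classification (cf.\ \cite{Cantat99}) its unique, $G$-equivariant minimal model is a K3 surface, an Enriques surface, an abelian surface, or a bielliptic surface, and the last is impossible because $\Aut$ of a bielliptic surface fixes the two elliptic fibration classes and hence acts trivially on the rank-two group $\NS$. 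Since abelian surfaces are Q-abelian, in every case $F$ is $G$-equivariantly birational to a K3 surface, an Enriques surface, or a Q-abelian variety. The passages invoking the product formula, the birational invariance of $\dr$, and \cref{thm:Zhang} are routine given the cited results; the one step I expect to require real care is the finiteness of $G|_B$ over a rational Iitaka base, discussed above.
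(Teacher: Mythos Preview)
Your proof is correct and follows the same overall arc as the paper---Iitaka fibration, descent to the base, product formula on fibres, then \cref{thm:Zhang} or Cantat's surface classification---but the finiteness of $G|_B$ is handled quite differently. The paper bypasses your entire genus trichotomy by directly invoking the Deligne--Nakamura--Ueno theorem \cite[Theorem~14.10]{Ueno75}, which asserts in one stroke that $\Bir(X)$ (hence $G$) acts on the Iitaka base through a finite group; after passing to the kernel, $G|_B$ is trivial, and only \emph{then} does the paper apply \cref{Zha-Hu1} to deduce $\kappa(X)=1$. By contrast, you first extract $\dr(G|_B)=0$ from the intrinsic polarization on $B_{\mathrm{can}}$ (a valid and self-contained argument), use \cref{Zha-Hu1} to pin down $\dim B=1$, and then run the genus case analysis to obtain finiteness---with the rational base requiring the canonical bundle formula and a discriminant/moduli-map argument that you rightly flag as delicate. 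Your route is more hands-on and avoids citing the somewhat opaque \cite{Ueno75}, but Deligne--Nakamura--Ueno disposes of the whole issue (including the $B\cong\bP^1$ case) uniformly and without ever needing to know $\dim B$, so the paper's proof is noticeably shorter. One small point: in your isotrivial case you write $\kappa(X)=\kappa(\bP^1,K_{\bP^1}+\Delta)$, silently dropping the moduli part $M$; this is indeed numerically trivial for isotrivial fibrations but deserves a word, after which $\deg\Delta>2$ with all coefficients $<1$ does force $|\Supp\Delta|\ge 3$ as you claim.
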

\begin{proof}
Let $\phi \coloneqq \Phi_{|mK_X|} \colon X \ratmap B \subseteq \bbP(H^0(X, mK_X))$ be the Iitaka fibration of $X$ with $B$ the image of $\Phi_{|mK_X|}$ for $m \gg 0$.
It follows from the Deligne--Nakamura--Ueno theorem (see \cite[Theorem~14.10]{Ueno75}) that $G$ descends to a finite group $G|_B$ acting on $B$ biregularly.
Replacing $G$ by a finite-index subgroup, which does not change its dynamical rank, we may assume that $G|_B = \{ \id \}$.
Further, replacing $X$ and $B$ by $G$-equivariant resolutions of singularities of the graph $\Gamma_\phi$ of $\phi$ and of $B$, we may also assume that $\phi$ is a regular morphism and $B$ is smooth,
since by \cref{Zha-Hu2} the new pair $(X,G)$ still satisfies \hyperref[hyp]{\rm Hyp$(n,n-2)$}.
If $\kappa(X) = n$, i.e., $X \ratmap B$ is birational, then again thanks to \cref{Zha-Hu2}, we have $n-2 = \dr(G|_X) = \dr(G|_B) = 0$, a contradiction.
So we may assume that $0 < \kappa(X) < n$, which yields that $\phi \colon X \lra B$ is a non-trivial $G$-equivariant fibration.
It thus follows from \cref{Zha-Hu1} that
\[
n-2 = \dr(G|_X) \le \dr(G|_B) + n - \dim B - 1 = n - \kappa(X) - 1,
\]
and hence $\kappa(X) = 1$ so that $B$ is a curve.

It remains to show the assertion (2).
Since $G$ acts trivially on the base, $G$ acts naturally and regularly on the very general fiber $F$ of $\phi$.
For any $g \in G$, let $g_F$ denote the induced automorphism of $g$ on $F$.
By the product formula (see \cite[Theorem~1.1]{DN11}), the first dynamical degree $d_1(g_F)$ of $g_F$ equals $d_1(g)$ which is larger than $1$ if $g \ne \id$.
Therefore, $G$ acts faithfully (and also regularly) on $F$ so that we can identify $G$ with $G|_F \le \Aut(F)$ and $(F,G|_F)$ satisfies \hyperref[hyp]{\rm Hyp$(n-1,n-2)$}.

Lastly, note that $F$, as a very general fiber of the Iitaka fibration, has Kodaira dimension zero and hence is not rationally connected.
Then Zhang's \cref{thm:Zhang} yields that, up to replacing $G$ by a finite-index subgroup, $F$ is $G$-equivariantly birational to a Q-abelian variety if $\dim F \ge 3$ or equivalently $n \ge 4$.
On the other hand, if $\dim F = 2$, since it admits an automorphism of positive entropy, it is well known that our $F$ is either a K3 surface, an Enriques surface, or an abelian surface (see \cite[Proposition~1]{Cantat99}).
\end{proof}

\begin{remark}
\label{rmk:pos-kappa}
Using a similar proof of the above lemma, one can also show the following result.
Let $(X,G)$ satisfy \hyperref[hyp]{\rm Hyp$(n,r)$} with $1 \le r \le n-2$.
If the Kodaira dimension $\kappa(X)$ of $X$ is positive, then $\kappa(X) \le n-r-1$ (this is actually not new; see \cite[Lemma~2.11]{Zhang09}).
Moreover, after replacing $G$ by a finite-index subgroup, we may assume that $G$ acts trivially on $B$ and naturally on the very general fiber $F$ of the Iitaka fibration $\phi \colon X \ratmap B$ with $\dim F = n-\kappa(X)$.
Better still, the product formula asserts that for each $g \in G \setminus \{\id\}$, the restriction $g_F$ of $F$ is of positive entropy since so is $g$.
Hence, the $G$-action on $F$ is faithful and $(F, G|_F)$ satisfies \hyperref[hyp]{\rm Hyp$(n-\kappa(X), r)$}.
In summary, we have the following reduction:
\[
\begin{tikzcd}
\text{\hyperref[hyp]{\rm Hyp$(n,r)$}} \text{ with } \kappa>0 \arrow[r, rightsquigarrow] & \text{\hyperref[hyp]{\rm Hyp$(n',r)$}} \text{ with } \kappa=0 \text{ and } n'<n.
\end{tikzcd}
\]
\end{remark}

The following lemma partially deals with the Kodaira dimension zero case.

\begin{lemma}
\label{lemma:kappa=0}
Let $(X,G)$ satisfy \hyperref[hyp]{\rm Hyp$(n,n-2)$} with $n \ge 3$.
Suppose that $X$ has only klt singularities and $K_X \num 0$.
Then after replacing $G$ by a finite-index subgroup, there exist a finite cover $Y \lra X$, \'etale in codimension one, and a faithful $G$-action on $Y$ such that $Y$ is $G$-equivariantly birational to one of the following varieties:
\begin{itemize}
\item[(1)] an abelian variety $A$, where $(A, G|_A)$ satisfies \hyperref[hyp]{\rm Hyp$(n,n-2)$};
\item[(2)] a weak Calabi--Yau variety $Z$, where $(Z, G|_Z)$ satisfies \hyperref[hyp]{\rm Hyp$(n,n-2)$};
\item[(3)] a product of a weak Calabi--Yau surface $S$ and an abelian variety $A$, where $(S, G|_S)$ and $(A, G|_A)$ satisfy \hyperref[hyp]{\rm Hyp$(2,1)$} and \hyperref[hyp]{\rm Hyp$(n-2,n-3)$}, respectively.
\end{itemize}
\end{lemma}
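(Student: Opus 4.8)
The plan is to pull $X$ back to the Bogomolov--Beauville type splitting furnished by \cref{lemma:lifting} and then bootstrap Zhang's maximal-rank theorem \cref{thm:Zhang} applied to the weak Calabi--Yau factor. Since $X$ is klt with $K_X\num0$, \cref{rmk:Nakayama}(1) lets us invoke \cref{lemma:lifting}, and by \cref{rmk:Nakayama}(2), after replacing $G$ by a finite-index subgroup we obtain a finite cover $\pi\colon Y\to X$ that is \'etale in codimension one, an isomorphism $Y\isom Z\times A$ with $Z$ weak Calabi--Yau and $A$ an abelian variety of dimension $\widetilde q(X)$, and a faithful $G$-action on $Y$ under which each $g\in G$ acts as a product $g|_Z\times g|_A$. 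Since $\pi$ is $G$-equivariant and generically finite, \cref{Zha-Hu2} gives $\dr(G|_Y)=\dr(G|_X)=n-2=\rank G$; as $G$ acts faithfully on $Y$, the null-entropy subset $N(G|_Y)$ then has rank $0$, hence is finite, and after one more finite-index shrinking $(Y,G)$ satisfies \hyperref[hyp]{\rm Hyp$(n,n-2)$}. Finally, the product formula for dynamical degrees \cite{DN11} yields $d_1(g|_Y)=\max\{d_1(g|_Z),\,d_1(g|_A)\}$ for every $g$.

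If $\widetilde q(X)=0$ then $Y=Z$ is weak Calabi--Yau of dimension $n$ and $(Z,G)$ satisfies \hyperref[hyp]{\rm Hyp$(n,n-2)$}, which is conclusion~(2); if $\widetilde q(X)=n$ then $Y=A$ is an abelian variety, which is conclusion~(1). So assume $0<\widetilde q(X)<n$, so $Z$ and $A$ are both positive-dimensional. Put $K_Z=\ker(G\to\Aut Z)$ and $K_A=\ker(G\to\Aut A)$; faithfulness on $Y=Z\times A$ gives $K_Z\cap K_A=\{\id\}$, hence $K_Z\oplus K_A\injmap G$. For $\id\neq g\in K_Z$ one has $d_1(g|_Z)=1$, so $d_1(g|_A)=d_1(g|_Y)>1$ and $g|_A\neq\id$; thus $K_Z$ embeds into $\Aut(A)$ with image of positive entropy, whence $\rank K_Z\le\dim A-1$ by \cref{thm:Zhang-A}, and symmetrically $\rank K_A\le\dim Z-1$. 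Together with $\rank K_Z+\rank K_A\le n-2=(\dim Z-1)+(\dim A-1)$ this forces $\rank K_Z=\dim A-1$ and $\rank K_A=\dim Z-1$; in particular $(Z,K_A|_Z)$ and $(A,K_Z|_A)$ are free abelian groups of positive entropy of the maximal dynamical ranks $\dim Z-1$ and $\dim A-1$, respectively.

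Now $\dim Z\neq1$, since a one-dimensional weak Calabi--Yau variety would be an elliptic curve, forcing $\widetilde q=1\neq0$. Suppose $\dim Z\ge3$. Then $(Z,K_A|_Z)$ satisfies \hyperref[hyp]{\rm Hyp$(\dim Z,\dim Z-1)$}, and $Z$, being nonruled, is not rationally connected; so \cref{thm:Zhang} (applied after shrinking $K_A$ to a finite-index subgroup) shows $Z$ is equivariantly birational to a Q-abelian variety $A_Z/F_Z$ with $K_{A_Z/F_Z}\sim_\bQ0$. But $Z$ and $A_Z/F_Z$ are both klt with $\bQ$-trivial canonical class, so the negativity lemma shows the birational map contracts no divisor in either direction, i.e.\ it is an isomorphism in codimension one; since the augmented irregularity is preserved by codimension-one isomorphisms (by the argument of \cref{rmk:aug-q}(2)), we get $\widetilde q(Z)=\widetilde q(A_Z/F_Z)=\dim A_Z>0$, contradicting that $Z$ is weak Calabi--Yau. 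Hence $\dim Z=2$, so $\widetilde q(X)=n-2$, $\dim A=n-2$, and $Y\isom S\times A$ with $S\coloneqq Z$ a weak Calabi--Yau surface---conclusion~(3). It remains to upgrade ``$K_A|_S$ and $K_Z|_A$'' to ``$G|_S$ and $G|_A$'': since $K_A|_S\le G|_S$ is of positive entropy of rank $1=\dim S-1$ while $\dr(G|_S)\le\dim S-1$ by \cref{thm:Zhang-A}, we get $\dr(G|_S)=1$, so $\rank N(G|_S)=\rank(G/K_S)-1=0$ (using $\rank K_S=\dim A-1=n-3$), hence $N(G|_S)$ is finite; a last finite-index replacement of $G$ makes $(S,G|_S)\isom\bZ$ of positive entropy, and the same reasoning applies to $(A,G|_A)$. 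Absorbing the finitely many finite-index replacements into one finishes the proof.

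The step I expect to be the main obstacle is ruling out $\dim Z\ge3$: it hinges on transporting the vanishing $\widetilde q(Z)=0$ across the birational map produced by \cref{thm:Zhang}, which is only legitimate because both sides are klt with numerically trivial canonical class and are therefore isomorphic in codimension one---so the structure theory of varieties with $K\equiv0$ enters essentially, and one must be careful that $\widetilde q$, not merely $q$, is what is preserved.
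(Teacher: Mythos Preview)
Your rank computation for the kernels is the gap. From $K_Z\oplus K_A\injmap G$ and the positive-entropy embeddings $K_Z\injmap\Aut(A)$, $K_A\injmap\Aut(Z)$ you correctly obtain the \emph{upper} bounds $\rank K_Z\le\dim A-1$, $\rank K_A\le\dim Z-1$, and $\rank K_Z+\rank K_A\le n-2$. But the third inequality is just the sum of the first two (or the trivial consequence of $K_Z\oplus K_A\injmap G$); nothing here produces a \emph{lower} bound on the sum, so equality is not forced. Both kernels can be strictly smaller---for instance whenever $G|_Z$ or $G|_A$ carries infinite-order null-entropy elements (think of a translation on $A$ appearing in the image $G|_A$). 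Everything downstream that relies on the exact ranks---the application of \cref{thm:Zhang} to $(Z,K_A|_Z)$ and the final upgrade using ``$\rank K_S=n-3$''---collapses with this.

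The repair is to work with $G|_Z$ and $G|_A$ directly, as the paper does: the projections $Y\to Z$ and $Y\to A$ are $G$-equivariant with positive-dimensional fibers, so \cref{Zha-Hu1} yields $\dr(G|_Z)=\dim Z-1$ and $\dr(G|_A)=\dim A-1$, and after a finite-index shrinking one obtains $(Z,G|_Z)$ and $(A,G|_A)$ satisfying the maximal-rank hypotheses. With that input, your treatment of the case $\dim Z\ge3$ is genuinely different from the paper's and in fact sharper: the paper does not exclude $\dim Z\ge3$ but instead handles it by passing to a further \'etale-in-codimension-one cover $\widetilde Z\times A\to Z\times A$ (with $\widetilde Z$ the normalization of $Z\times_{B/F}B$), which is $G$-equivariantly birational to the abelian variety $B\times A$, thus landing in case~(1). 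Your observation---that a weak Calabi--Yau $Z$ birational to a Q-abelian variety would be isomorphic to it in codimension one (both being klt minimal models with $K\sim_\bQ0$) and would therefore inherit $\widetilde q>0$, contradicting the definition---shows this detour is never actually needed. Do note, though, that the invariance of $\widetilde q$ under isomorphism in codimension one requires its own short argument (pull back an \'etale-in-codimension-one cover across the common big open set and renormalize; then compare $q$ using rationality of singularities), not merely a citation of \cref{rmk:aug-q}(2).
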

\begin{proof}
It follows from \cref{lemma:lifting,rmk:Nakayama} that there is a finite cover $\widetilde{X} \lra X$, \'etale in codimension one, such that $\widetilde{X} \isom Z \times A$ for a weak Calabi--Yau variety $Z$ and an abelian variety $A$ of dimension $\widetilde{q}(X)$, the augmented irregularity of $X$;
furthermore, the action of $G$ on $X$ extends to a faithful action of $\widetilde{G}$ on $\widetilde{X}$.
Replacing $\widetilde{G}$ by a finite-index subgroup, we may assume that $\widetilde{G}$ also acts faithfully on $X$ and can be identified with a finite-index subgroup of $G$ (cf.~\cite[Lemma~2.4]{Zhang13}).
Therefore, after replacing $G$ by the above mentioned finite-index subgroup, we may assume that $(\widetilde{X}, G)$ satisfies \hyperref[hyp]{\rm Hyp$(n,n-2)$} by \cref{Zha-Hu2} since so does $(X,G)$.
We hence have the following three cases to analyze.

{\it Case $1$.} $\widetilde{q}(X) = n$ and hence $\widetilde{X} = A$ is an abelian variety. In this case, the pair $(A, G|_A)$ satisfies \hyperref[hyp]{\rm Hyp$(n,n-2)$} and we just take $Y$ to be $A$.

{\it Case $2$.} $\widetilde{q}(X) = 0$ and hence $\widetilde{X} = Z$ is a weak Calabi--Yau variety of dimension $n$. So the pair $(Z, G|_Z)$ also satisfies \hyperref[hyp]{\rm Hyp$(n,n-2)$}.
We then choose $Y$ to be $Z$.

{\it Case $3$.} $0 < \widetilde{q}(X) < n$ so that $\widetilde{X}$ is an actual product $Z \times A$ with each factor being positive-dimensional.
According to \cref{lemma:split}, we denote by $G|_Z$ and $G|_A$ the induced group actions of $G$ on $Z$ and $A$, respectively; note that both are finitely generated abelian groups.
It follows from \cref{Zha-Hu1} that $\dr(G|_Z) = \dim Z - 1 \eqqcolon r_1$ and $\dr(G|_A) = \dim A - 1 \eqqcolon r_2$.
Applying \cite[Theorem~I]{DS04} to the pair $(A, G|_A)$ yields that the null-entropy subgroup $N(G|_A)$ of $G|_A$ is finite.
So, up to replacing $G|_A$ and hence $G$ by a finite-index subgroup, we may assume that $G|_A\isom \bZ^{r_2}$ is a free abelian group of positive entropy.
Thanks to \cref{Zha-Hu2}, the same argument applies to the $G|_Z$-equivariant resolution of $Z$.
Thus we can assume that $G|_Z \isom \bZ^{r_1}$ is of positive entropy.
In particular, $(Z, G|_Z)$ and $(A, G|_A)$ satisfy \hyperref[hyp]{\rm Hyp$(r_1+1,r_1)$} and \hyperref[hyp]{\rm Hyp$(r_2+1,r_2)$}, respectively.

If $\dim Z = 2$ (i.e., $r_1=1$), then $Z$ is just a weak Calabi--Yau surface $S$.
So in this case we take $Y$ to be $\widetilde{X} \isom S\times A$.

Let us consider the case when $\dim Z \ge 3$.
Recall that as a weak Calabi--Yau variety (see \cref{def:wCY}), $Z$ is not rationally connected and has only canonical singularities with $K_Z \sim 0$.
So applying \cref{thm:Zhang} to $(Z, G|_Z \isom \bZ^{r_1})$ asserts that, up to replacing $G|_Z$ and hence $G$ by a finite-index subgroup, $Z$ is birational to a Q-abelian variety $B/F$ such that the induced action of $G|_Z$ on $B/F$ is biregular, where $B$ is an abelian variety and $F$ is a finite group whose action on $B$ is free outside a finite subset of $B$;
moreover, there is a faithful action of $G|_Z$ on $B$ such that $B \lra B/F$ is $G|_Z$-equivariant.
Clearly, the pair $(B, G|_B = G|_Z)$ satisfies \hyperref[hyp]{\rm Hyp$(r_1+1,r_1)$}  since so does $(Z, G|_Z)$.
Let $\widetilde{Z}$ be the normalization of the fiber product $Z \times_{B/F} B$, which inherits a natural faithful $G|_Z$-action.
Then the induced projection $\widetilde{Z} \lra Z$ is finite surjective and \'etale in codimension one.
Also, $\widetilde{Z} \ratmap B$ is a $G|_Z$-equivariant birational map.
This yields that $Y \coloneqq \widetilde{Z} \times A$ is $G$-equivariantly birational to the abelian variety $B \times A$,
while $Y \lra \widetilde{X} \lra X$ is still \'etale in codimension one.
It is easy to see that $(B \times A, G = G|_B \times G|_A)$ also satisfies \hyperref[hyp]{\rm Hyp$(n,n-2)$}.
We thus complete the proof of \cref{lemma:kappa=0}.
\end{proof}

\begin{remark}
\label{rmk:kappa=0}
If $X$ is smooth, we are able to give a finer characterization as follows.
Recall that for a projective manifold $X$ with numerically trivial canonical bundle, there exists a unique minimal splitting cover $\widetilde{X}$ in the sense of Beauville \cite[\S3]{Beauville83}, of the form
\[
A \times \prod V_i \times \prod X_j,
\]
where $A$ is an abelian variety, the $V_i$ are (simply connected) Calabi--Yau manifolds and the $X_j$ are projective hyper-K\"ahler manifolds.

As a consequence, any automorphism of $X$ extends to $\widetilde{X}$ and then splits into pieces (up to permutations).
More precisely, if $G \isom \bZ^{n-2}$ is a subgroup of $\Aut(X)$ such that $G$ is of positive entropy,
then there exists a group $\widetilde{G}$ (the lifting of $G$) acting faithfully on $\widetilde{X}$ such that $G=\widetilde{G}/F$, where $F$ is the Galois group of the minimal splitting cover $\widetilde{X} \lra X$.
Replacing $\widetilde{G}$ by a finite-index subgroup, we may assume that $\widetilde{G}$ also acts faithfully on $X$ (cf.~\cite[Lemma~2.4]{Zhang13}), both $(\widetilde{X}, \widetilde{G})$ and $(X, \widetilde{G})$ satisfy \hyperref[hyp]{\rm Hyp$(n,n-2)$};
further, the group $\widetilde{G}$ acting on $\widetilde{X}$ splits as a subgroup of
\[
\widetilde{G}|_A \times \prod \widetilde{G}|_{V_i} \times \prod \widetilde{G}|_{X_j}.
\]

One can use the similar argument as in \cref{lemma:kappa=0} to show that there are at most two factors.
Moreover, it is well-known that $\dr(\widetilde{G}|_{X_j}) \le 1$ (see e.g., \cite[Theorem~4.6]{KOZ09}) so that the $X_j$ are K3 surfaces.
In summary, the covering space $\widetilde{X}$ decomposes into a product of abelian varieties, Calabi--Yau manifolds, or K3 surfaces with at most two factors.
Clearly, there are seven possibilities/classes.

%We believe that to deal with the general case when $G \isom \bZ^r$ is then more or less a combinatorial problem.
\end{remark}

\begin{remark}
\label{rmk:singular-BB}
Unfortunately, we are not able to deal with the singular case in an analogous way as in \cref{rmk:kappa=0}, though we already have the Bogomolov--Beauville decomposition for minimal models with trivial canonical class due to H\"oring and Peternell \cite[Theorem~1.5]{HP19}.
The reason for this is as follows.

Let $X$ be a normal projective variety with at most klt singularities such that $K_X \num 0$.
Let $\pi \colon \widetilde{X} \lra X$ be a finite cover, \'etale in codimension one, such that
\[
\widetilde{X} \isom A \times \prod Y_j \times \prod Z_k,
\]
where $A$ is an abelian variety, the $Y_j$ are (singular) Calabi--Yau varieties and the $Z_k$ are (singular) irreducible holomorphic symplectic varieties (see \cite[Definition~1.3]{GGK19}).
Note that a compact K\"ahler manifold with numerically trivial canonical bundle has an almost abelian (aka abelian-by-finite) fundamental group.
This fact is used to conclude the existence of the unique minimal splitting cover in \cite[\S3]{Beauville83} for the smooth case.
However, in the general singular setup, as far as we can tell, the finiteness of fundamental groups of Calabi--Yau varieties is still unknown (see e.g., \cite[\S13]{GGK19}).
It is thus not clear to us that we can always lift the automorphisms of $X$ to some splitting cover $\widetilde{X}$.
The failure of the strategy of \cref{rmk:kappa=0} for general singular varieties forces us to work on the weak decomposition as we mentioned earlier at the beginning of \cref{subsec:weak-decomp}.
\end{remark}

Finally, it remains to consider the negative Kodaira dimension case, where the existence of the so-called special MRC fibration plays a crucial role (see \cref{subsec:SMRC}, or rather \cite[Theorem~4.18]{Nakayama10}).

\begin{lemma}
\label{lemma:neg-kappa}
Let $(X,G)$ satisfy \hyperref[hyp]{\rm Hyp$(n,n-2)$} with $n \ge 3$.
Suppose that $X$ is uniruled but not rationally connected.
Let $\pi \colon X \ratmap Z$ be the special MRC fibration of $X$.
Then one of the following assertions holds.
\begin{itemize}
\item[(1)] $Z$ is a curve of genus $\ge 1$.
\item[(2)] $Z$ is a K3 surface, an Enriques surface, or an abelian surface such that $\dr(G|_Z) = 1$.
\item[(3)] $Z$ has dimension at least $3$. Then after replacing $G$ by a finite-index subgroup, $Z$ is birational to a Q-abelian variety $A/F$ such that the induced action of $G|_Z$ on $A/F$ is biregular, where $A$ is an abelian variety and $F$ is a finite group acting on $A$ freely outside a finite subset of $A$;
moreover, there is a faithful action of $G|_Z$ on $A$ such that the quotient map $A \lra A/F$ is $G|_Z$-equivariant,
and hence by \cref{lemma:sMRC-cover} there exists a finite cover $X' \lra X$, \'etale in codimension one, such that the induced map $\pi' \colon X' \ratmap A$ is $G$-equivariantly birational to the MRC fibration of $X'$.
%\warning{The $X'$ is clear}
\end{itemize}
\end{lemma}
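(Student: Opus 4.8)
The plan is to push the special MRC fibration of $X$ through the dynamical-rank machinery of \cref{subsec:dyn-rank} and then feed its base into Zhang's maximal-rank classification, \cref{thm:Zhang}. By \cref{lemma:sMRC}, $G$ descends to a biregular action $G|_Z$ on $Z$, the map $\pi$ is $G$-equivariant, and there is a $G$-equivariant equidimensional surjection $q\colon W\to Z$ from a modification $p\colon W\to X$ with rationally connected general fibre. Since $X$ is uniruled but not rationally connected, $Z$ is neither a point nor birational to $X$, so $1\le\dim Z\le n-1$, and $Z$ is non-uniruled by definition of the special MRC fibration. Applying \cref{Zha-Hu1} to $\pi\colon X\ratmap Z$ shows that $G|_{\NS(Z)_\bR}$ is solvable and, because $\dr(G|_X)=n-2$, that $\dr(G|_Z)=\dim Z-1$; that is, $G|_Z$ has \emph{maximal} dynamical rank on $Z$. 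The statement then splits according to $\dim Z\in\{1,2\}$ or $\dim Z\ge 3$.

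If $\dim Z=1$, then $Z$ is a non-uniruled curve, hence of genus $\ge 1$: this is (1) (and $\dr(G|_Z)=0$, consistently). If $\dim Z=2$, then $\dr(G|_Z)=1$, so $G|_Z$ contains a positive-entropy automorphism; passing to a $G|_Z$-equivariant resolution yields a smooth non-uniruled---hence non-rational---projective surface carrying a positive-entropy automorphism, which by Cantat's classification \cite[Proposition~1]{Cantat99} is birational to a K3, an Enriques, or an abelian surface. As each of these minimal surfaces satisfies $\Bir=\Aut$, replacing $Z$ by the appropriate minimal model (a legitimate special-MRC quotient, by uniqueness up to birational equivalence) keeps the $G|_Z$-action biregular, giving (2).

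Now suppose $m:=\dim Z\ge 3$. After replacing $G$, hence $G|_Z$, by a finite-index subgroup we may assume by \cref{thm:Zhang-A} that $N(G|_Z)$ is a normal subgroup with $G|_Z/N(G|_Z)\isom\bZ^{m-1}$; splitting this extension of abelian groups produces a subgroup $H\le G|_Z$ with $H\isom\bZ^{m-1}$, all of whose nonidentity elements have positive entropy, so that $(Z,H)$ satisfies \hyperref[hyp]{\rm Hyp$(m,m-1)$}. Since $Z$ is non-uniruled, hence not rationally connected, \cref{thm:Zhang} applies (after possibly shrinking $H$): $Z$ is birational to a Q-abelian variety $Y=A/F$ with $K_Y\sim_\bQ 0$, $F$ acting on $A$ freely off a finite set, the $H$-action on $Y$ biregular and lifting to a faithful $H$-action on $A$ with $A\to Y$ equivariant, and every $H$-periodic proper subvariety of $Y$ or of $A$ being a point. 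To promote this to the full $G|_Z$, one uses that $Y=A/F$ is $\bQ$-factorial klt with $K_Y\sim_\bQ 0$, hence a minimal model: any birational self-map of $Y$ is an isomorphism in codimension one and, lifted through the Albanese closure $A\to Y$ of \cref{lemma:alb-closure} (note $\widetilde{q}(Y)=\dim Y$) to a birational self-map of $A$, which is automatically an automorphism, descends to an automorphism of $Y$; thus $\Bir(Y)=\Aut(Y)$, so the birational identification $Z\ratmap Y$ conjugates the whole biregular $G|_Z$-action into $\Aut(Y)$, and a final finite-index reduction (cf.~\cite[Lemma~2.4]{Zhang13}) makes it lift to a faithful $G|_Z$-action on $A$ with $A\to Y$ equivariant. (Equivalently, one may first prove that $N(G|_Z)$ is finite, using the structure of the model $A/F$ together with the fact that its only $H$-periodic proper subvarieties are points, and then pass to a finite-index subgroup of $G$ so that $(Z,G|_Z)$ itself satisfies \hyperref[hyp]{\rm Hyp$(m,m-1)$}.) Taking $Z:=Y=A/F$ as the special-MRC model, the morphism $A\to Y$ is finite, \'etale in codimension one, and $G|_Z$-equivariant, so \cref{lemma:sMRC-cover} applied with $\theta_Z\colon A\to A/F$ supplies a finite cover $\theta_X\colon X'\to X$, \'etale in codimension one, and an equidimensional $q'\colon W'\to A$ such that $\pi':=q'\circ p'^{-1}\colon X'\ratmap A$ is $G$-equivariantly birational to the MRC fibration of $X'$; this is (3).

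I expect the delicate point to be this last upgrade from $H$ to $G|_Z$. The dynamical-rank bookkeeping that forces maximal rank on $Z$ is entirely packaged in \cref{Zha-Hu1}, and the surface case is classical; but the hypothesis \hyperref[hyp]{\rm Hyp$(n,n-2)$} only guarantees that each nonidentity $g\in G$ has positive entropy on $X$, and by the product formula \cite[Theorem~1.1]{DN11} this entropy may come entirely from the degree of $g$ along the fibres of $q$ while $g$ acts on $Z$ with null entropy, so a priori $N(G|_Z)$ could be infinite and cannot simply be killed by a finite-index reduction. Establishing either the finiteness of $N(G|_Z)$ or, for our purposes equivalently, the identity $\Bir(A/F)=\Aut(A/F)$ together with the liftability of automorphisms through the Albanese closure, is therefore the crux; the klt, $\bQ$-factorial, $K\sim_\bQ 0$ structure of the model $A/F$ is exactly what makes it available.
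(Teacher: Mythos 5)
Your reduction to $\dr(G|_Z)=\dim Z-1$ via \cref{lemma:sMRC} and \cref{Zha-Hu1}, and your treatment of the cases $\dim Z\le 2$, match the paper. The problem is exactly where you place it: for $\dim Z\ge 3$ you apply \cref{thm:Zhang} only to a splitting $H\isom\bZ^{\dim Z-1}$ of $G|_Z/N(G|_Z)$, and you then must upgrade the conclusion from $H$ to all of $G|_Z$ --- this upgrade is not optional, since \cref{lemma:sMRC-cover} requires the cover $\theta_Z\colon A\lra A/F$ to be $G|_Z$-equivariant, i.e.\ the \emph{whole} group must act biregularly on $A/F$ and lift faithfully to $A$. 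Your proposed upgrade ($\Bir(A/F)=\Aut(A/F)$, plus lifting a self-map that is only an isomorphism in codimension one through the quasi-\'etale cover $A\to A/F$ via \cref{lemma:alb-closure}, plus the finite-index bookkeeping needed to get a faithful $G|_Z$-action on $A$) is only sketched, and you yourself flag it as the unestablished crux. As written, the $\dim Z\ge 3$ case is therefore incomplete.

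The missing idea is much simpler and is what the paper does (exactly as in \cref{lemma:kappa=0}, Case 3): once $\dr(G|_Z)=\dim Z-1$ is \emph{maximal}, Theorem~I of Dinh--Sibony \cite{DS04} --- applied to a $G|_Z$-equivariant resolution of $Z$, with \cref{Zha-Hu2} guaranteeing that dynamical degrees and the dynamical rank are unchanged --- gives that the null-entropy subgroup $N(G|_Z)$ is \emph{finite}. So, contrary to the worry in your last paragraph, a finite-index reduction does kill $N(G|_Z)$: after replacing $G$ by a finite-index subgroup, $(Z,G|_Z)$ itself satisfies {\rm Hyp}$(\dim Z,\dim Z-1)$, and \cref{thm:Zhang} applies to the full group at once, furnishing the biregular $G|_Z$-action on $A/F$ and its faithful lift to $A$ with $A\to A/F$ equivariant; \cref{lemma:sMRC-cover} then concludes as you describe. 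No splitting off of $H$, no $\Bir=\Aut$ statement for Q-abelian varieties, and no lifting of birational self-maps through the Albanese closure are needed.
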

\begin{proof}
Note that $Z$ has dimension at least one because $X$ is not rationally connected.
By \cref{lemma:sMRC} or \cite[Theorem~4.19]{Nakayama10}, $G$ descends to a biregular action $G|_Z$ on $Z$.
Since $Z$ is non-uniruled (see \cite{GHS03}), $\pi$ is a non-trivial $G$-equivariant rational fibration.
It follows from \cref{Zha-Hu1} that $\dr(G|_Z) = \dim Z-1$.

Note that $Z$ is not rationally connected since it is non-uniruled.
Therefore, if $\dim Z = 1$, then $Z$ is a curve of genus $\ge 1$.
If $\dim Z = 2$, then $Z$ is either a K3 surface, an Enriques surface or an abelian surface (see e.g., \cite[Proposition~1]{Cantat99}).
If $\dim Z\ge 3$, similar as in the proof of \cref{lemma:kappa=0}, Case 3, up to replacing $G|_Z$ and hence $G$ by a finite-index subgroup, we may assume that $(Z,G|_Z)$ satisfies \hyperref[hyp]{\rm Hyp$(\dim Z, \dim Z - 1)$} so that \cref{thm:Zhang} applies to $(Z,G|_Z)$.
More precisely, $Z$ is birational to a Q-abelian variety $A/F$ such that the induced action of $G|_Z$ on $A/F$ is biregular, where $A$ is an abelian variety and $F$ is a finite group whose action on $A$ is free outside a finite subset of $A$;
moreover, the $G|_Z$-action on $A/F$ extends to a faithful action on $A$ such that $A \lra A/F$ is also $G|_Z$-equivariant.
Now, by \cref{lemma:sMRC-cover}, there exist a normal projective variety $X'$ and a finite cover $X' \lra X$, \'etale in codimension one, such that the induced map $\pi' \colon X' \ratmap A$ is $G$-equivariantly birational to the MRC fibration of $X'$.
\end{proof}

\begin{proof}[Proof of \cref{thmA}]
It follows from \cref{lemma:pos-kappa,lemma:kappa=0,lemma:neg-kappa}.
\end{proof}

%\begin{rmk} \label{rmkB}
%In \cref{thmA}, if we replace the assumption ``Let $(X,G)$ satisfy \hyperref[hyp]{\rm Hyp$(n,n-2)$} with $n\ge 3$" by a weaker one ``Let $X$ be a normal projective variety of dimension $n\ge 3$ and $G$ a subgroup of $\Aut(X)$, such that $G|_{\NS(X)_\bR}$ is solvable and the dynamical rank $\dr(G)$ of $G$ is $n-2$", then almost all conclusions still hold.
%\end{rmk}

%%%%%%%%%%%%%%%%%%%%%%%%%%%%%%%%%%%%%%%%%%%%%%%%%%%%%%%%%%

\section*{Acknowledgments}

\noindent
The authors would like to thank De-Qi Zhang for many stimulating discussions and his helpful comments on an earlier draft.
The first author is much obliged to St\'ephane Druel, Stefan Kebekus, Thomas Peternell and Chenyang Xu for answering his questions on the finiteness of fundamental groups of singular Calabi--Yau varieties.
The authors are also very grateful to the referee for carefully reading the manuscript and for his/her many helpful comments and suggestions.

%%%%%%%%%%%%%%%%%%%%%%%%%%%%%%%%%%%%%%%%%%%%%%%%%%%%%%%%%%

%\linespread{1.1}

%\bibliographystyle{amsplain}
%\bibliographystyle{amsalpha}
%\bibliographystyle{siam}
%\bibliographystyle{alpha}
%\bibliographystyle{plain}
%\bibliographystyle{unsrt}
%\bibliographystyle{abbrv}
%\bibliographystyle{acm}
%\bibliography{../mybib}

\providecommand{\bysame}{\leavevmode\hbox to3em{\hrulefill}\thinspace}
\providecommand{\MR}{\relax\ifhmode\unskip\space\fi MR }
% \MRhref is called by the amsart/book/proc definition of \MR.
\providecommand{\MRhref}[2]{%
  \href{http://www.ams.org/mathscinet-getitem?mr=#1}{#2}
}
\providecommand{\href}[2]{#2}

\end{document}